\documentclass[a4paper,12pt]{article}
\usepackage{amsmath,amssymb,enumerate,amsthm}

\newcommand{\R}{\mathbb{R}}

\newcommand{\N}{\mathbb{N}}
\newcommand{\ep}{\varepsilon}
\newcommand{\pa}{\partial}

\DeclareMathOperator{\supp}{supp}
\newcommand{\lr}[1]{{}\langle{}#1{}\rangle{}}

\setlength{\topmargin}{0mm}
\setlength{\oddsidemargin}{0mm}
\setlength{\evensidemargin}{0mm}
\setlength{\textwidth}{160mm}
\setlength{\textheight}{220mm}

\newtheorem{theorem}{Theorem}[section]
\newtheorem{lemma}[theorem]{Lemma}
\newtheorem{proposition}[theorem]{Proposition}
\newtheorem{corollary}[theorem]{Corollary}
\theoremstyle{remark}
\newtheorem{remark}{Remark}[section]
\theoremstyle{definition}

\newtheorem{definition}{Definition}[section]

\numberwithin{equation}{section}

\makeatletter
\def\@cite#1#2{[{{\bfseries #1}\if@tempswa , #2\fi}]}
\makeatother                  

\begin{document}
\begin{center}
\Large{{\bf
Lifespan estimates 
for semilinear damped wave equation 
in a two-dimensional exterior domain
}}
\end{center}

\vspace{5pt}

\begin{center}
Masahiro Ikeda%
\footnote{Center for Advanced Intelligence Project,
RIKEN,
Nihonbashi 1-chome Mitsui Building, 15th floor, 1-4-1 Nihonbashi,Chuo-ku, Tokyo, 103-0027, Japan, 
E-mail:\ {\tt masahiro.ikeda@riken.jp}},
Motohiro Sobajima%
\footnote{
Department of Mathematics, 
Faculty of Science and Technology, Tokyo University of Science,  
2641 Yamazaki, Noda-shi, Chiba, 278-8510, Japan,  
E-mail:\ {\tt msobajima1984@gmail.com}},
Koichi Taniguchi%
\footnote{
Advanced Institute for Materials Research,
Tohoku University,
2-1-1 Katahira, Aoba-ku, Sendai, 980-8577, Japan,
E-mail:\ {\tt koichi.taniguchi.b7@tohoku.ac.jp}}
\ and\ 
Yuta Wakasugi%
\footnote{
Laboratory of Mathematics, Graduate 
School of Advanced Science and 
Engineering, Hiroshima University, 
Higashi-Hiroshima, 739-8527, Japan, 
E-mail:\ {\tt wakasugi@hiroshima-u.ac.jp}}
\end{center}

\newenvironment{summary}{\vspace{.5\baselineskip}\begin{list}{}{%
     \setlength{\baselineskip}{0.85\baselineskip}
     \setlength{\topsep}{0pt}
     \setlength{\leftmargin}{12mm}
     \setlength{\rightmargin}{12mm}
     \setlength{\listparindent}{0mm}
     \setlength{\itemindent}{\listparindent}
     \setlength{\parsep}{0pt}
     \item\relax}}{\end{list}\vspace{.5\baselineskip}}
\begin{summary}
{\footnotesize {\bf Abstract.}
Lifespan estimates for semilinear damped wave equations of the form $\pa_t^2u-\Delta u+\pa_tu=|u|^p$ in a two dimensional exterior domain endowed with the Dirichlet boundary condition are dealt with. 
For the critical case of the semilinear heat equation $\pa_tv-\Delta v=v^2$ with the Dirichlet boundary condition and the initial condition $v(0)=\ep f$,  the corresponding lifespan can be estimated from below and above by $\exp(\exp(C\ep^{-1}))$ with different constants $C$. 
This paper clarifies that the same estimates hold even for the critical semilinear damped wave equation in the exterior of the unit ball under the restriction of radial symmetry.
To achieve this result, a new technique to control $L^1$-type norm and a new Gagliardo--Nirenberg type estimate with logarithmic weight are introduced.
}
\end{summary}

{\footnotesize{\it Mathematics Subject Classification}\/ (2020): %
Primary:%
35L20, 
Secondary:%
    35L71. 
}

{\footnotesize{\it Key words and phrases}\/: 
Damped wave equations, two-dimensional exterior problems, lifespan estimates.}

\tableofcontents

\newpage
\section{Introduction}
In this paper, we consider the initial-boundary value
problem of the semilinear damped wave equation 
in the exterior of the two-dimensional 
closed
unit ball $B=\{x\in \R^2\;;\;|x|\leq 1\}$, that is, 
\begin{equation}\label{intro:problem}
\begin{cases}
\pa_t^2u(x,t)-\Delta u(x,t)+\pa_tu(x,t)=|u(x,t)|^p
&\text{in}\ B^c\times (0,T), 
\\
u(x,t)=0
&\text{on}\ \pa B^c\times (0,T), 
\\
(u,\pa_tu)(x,0)=(0,\ep g(x)), &\text{in}\ B^c.
\end{cases}
\end{equation}
Here $p>1$ indicates the structure of the nonlinear term. 
The function $g:B^c\to \R$ is given
and $u:B^c\times[0,T)\to \R$ is unknown. 
The constant $\ep>0$ is the parameter describing  
the smallness of initial data. 
Our interest is 
the behavior of solutions to 
\eqref{intro:problem} with 
small initial data.

For the semilinear heat equation
\begin{equation}\label{intro:heat-whole}
\begin{cases}
\pa_tv(x,t)-\Delta v(x,t)=v(x,t)^p
&\text{in}\ \R^N\times (0,T), 
\\
v(x,0)=f(x)\geq 0, &\text{in}\ \R^N,
\end{cases}
\end{equation}
from the pioneering work \cite{Fujita1966} by Fujita, 
there are many papers dealing with 
the existence/nonexistence 
of 
global solutions to \eqref{intro:heat-whole} (see Quittner--Souplet \cite{QSbook}). 
Nowadays, 
the exponent $p_F(N)=1+\frac{2}{N}$ is well-known 
as the threshold for dividing the situation 
of the existence/nonexistence of nonnegative global solutions, 
namely, 
\begin{itemize}
\item if $1<p\leq p_F(N)$, 
then \eqref{intro:heat-whole} does not 
possess non-trivial global solutions;
\item if $p>p_F(N)$, 
then \eqref{intro:heat-whole} possesses a non-trivial 
global solution.
\end{itemize}
The similar phenomenon occurs 
also for 
the semilinear damped wave equation 
\begin{equation}\label{intro:dw-whole}
\begin{cases}
\pa_t^2u(x,t)-\Delta u(x,t)+\pa_tu(x,t)=|u(x,t)|^p
&\text{in}\ \R^N\times (0,T), 
\\
(u,\pa_tu)(x,0)=(u_0(x),u_1(x)), &\text{in}\ \R^N.
\end{cases}
\end{equation}
The pioneering work for the problem \eqref{intro:dw-whole} is the paper 
\cite{Matsumura1976} 
by Matsumura 
via the analysis of the profile of linear solutions 
in the following inequalities
\begin{gather}\label{intro:matsumura1}
\|\pa_t^k \pa_x^\alpha u(t)\|_{L^\infty(\R^N)}
\leq C(1+t)^{-\frac{N}{2}-k-\frac{|\alpha|}{2}}
(\|u_0\|_{H^{m+1}\cap L^1(\R^N)}
+\|u_1\|_{H^{m}\cap L^1(\R^N)})
, 
\\
\label{intro:matsumura2}
\|\pa_t^k \pa_x^\alpha u(t)\|_{L^2(\R^N)}
\leq C(1+t)^{-\frac{N}{4}-k-\frac{|\alpha|}{2}}
(\|u_0\|_{H^{\widetilde{m}+1}\cap L^1(\R^N)}
+\|u_1\|_{H^{\widetilde{m}}\cap L^1(\R^N)})
\end{gather}
(with $m=[\frac{N}{2}]+k+|\alpha|$ and $\widetilde{m}=k+|\alpha|-1$), 
which are   
so-called ``Matsumura estimates''. 
Until Todorova--Yordanov \cite{ToYo2001}
and Zhang \cite{Zhang2001},
(a rough description of) the situation of 
the existence/nonexistence of 
non-trivial (small) global solutions 
to \eqref{intro:dw-whole} 
is clarified as follows: 
\begin{itemize}
\item if $1<p\leq p_F(N)$ and 
$\int_{\R^N}(u_0+u_1)\,dx>0$, 
then \eqref{intro:dw-whole} does not 
possess non-trivial global solutions
(a kind of smallness does not provide global solutions);
\item if $p>p_F(N)$, 
then \eqref{intro:heat-whole} possesses a non-trivial 
global solution.
\end{itemize}
After that, the precise estimates of 
the lifespan $T_\ep$ (maximal existence time) 
of solutions to \eqref{intro:dw-whole} 
with the small initial data $(\ep f,\ep g)$
became the subject of interest. 
It is firstly 
discussed in Li--Zhou \cite{LZ1995}, 
and until Lai--Zhou \cite{LZ2019}
the sharp lifespan estimates are clarified as the following:
for sufficiently small $\ep>0$, 
\begin{align}
\begin{cases}
c_p\ep^{-(\frac{1}{p-1}-\frac{N}{2})^{-1}}
\leq T_\ep \leq  
C_p\ep^{-(\frac{1}{p-1}-\frac{N}{2})^{-1}}
&\text{if}\ 1<p<p_F(N), 
\\
\exp(c_{p}\ep^{-(p-1)})\leq T_\ep \leq \exp(C_p\ep^{-(p-1)})
&\text{if}\ p=p_F(N)
\end{cases}
\end{align}
for some positive constants $c_p$ and $C_p$ (independent of $\ep$);
note that these estimates are completely same as  
the ones for the blowup time of solutions to 
the semilinear heat equation \eqref{intro:heat-whole}
with the initial condition $v(x,0)=\ep f(x)$ (having the small parameter $\ep$).

In the case of 
the problem of the semilinar heat equation 
in an $N$-dimensional exterior domain $\Omega$
with the Dirichlet boundary condition:
\begin{equation}\label{intro:heat-exterior}
\begin{cases}
\pa_tv(x,t)-\Delta v(x,t)=v(x,t)^p
&\text{in}\ \Omega\times (0,T), 
\\
v(x,t)=0
&\text{on}\ \Omega\times (0,T), 
\\
v(x,0)=f(x)\geq 0, &\text{in}\ \Omega, 
\end{cases}
\end{equation}
the situation is almost similar as in the case of the whole space
when $N\geq 3$. 
Actually, for the linear case, 
Grigor'yan and Saloff-Coste \cite{GS2002} discussed 
the asymptotic behavior of the Dirichlet heat kernel 
for the exterior of a compact set in Riemannian manifolds. 
It is shown that 
the Dirichlet heat kernel for an $N$-dimensional exterior domain $(N\geq 3)$
behaves like the one for $\R^N$ in the far field.  
In contrast, the Dirichlet heat kernel 
in two-dimensional exterior domains cannot be approximated 
by the one for $\R^2$. 
This fact can be explained as the transient (recurrence) property for $N\geq 3$ ($N=2$) 
of the Brownian motion in $\R^N$.  
This significant difference reflects the difficulty 
of the case of two-dimensional exterior problem. 
By using the Kaplan's method via the Dirichlet heat kernel found in \cite{GS2002}, 
Pinsky \cite{Pinsky2009} tried to draw the complete picture 
of the existence/nonexistence of global solutions to \eqref{intro:heat-exterior}, 
however, 
the critical case $p=p_F(2)=2$ seems 
to have a gap in his proof.
Later, nonexistence of global solutions for the case $p=2$ 
is proved in Ikeda--Sobajima \cite{IkSo2019JMAA} via a sharpened test function method
with a distinctive shape of the (double exponential type) lifespan estimate 
\begin{equation}
\label{intro:double}
T_\ep \leq \exp(\exp(C\ep^{-1}))
\end{equation}
for the solution with the initial condition $v(x,0)=\ep f(x)$.  
One can prove that this is actually the sharp lifespan estimate 
via the supersolution-subsolution method (explained in \cite[Section 20]{QSbook})
with a supersolution
\[
U(x,t)= 
\alpha(t)
e^{t\Delta_\Omega}f, \quad 
\alpha(t)
=\ep \left(1-(p-1)\ep^{p-1}\int_0^t\|e^{s\Delta_\Omega}f\|_{L^\infty(\Omega)}^{p-1}\,ds\right)^{-\frac{1}{p-1}}
\]
with  the $L^\infty$-estimate of the Dirichlet heat semigroup 
$e^{t\Delta_\Omega}:$
\begin{equation}
\label{intro:eq:Linfty-logL1}
\|e^{t\Delta_\Omega}f\|_{L^\infty(\Omega)}\leq Ch(t)\big(\|(\log |x|) f\|_{L^1(\Omega)}+\|f\|_{L^\infty(\Omega)}\big), \quad t>0
\end{equation}
with the decay rate involving the logarithmic function
\begin{equation}\label{intro:h}
h(t)=\frac{1}{(1+t)(1+\log(1+t))}.
\end{equation}
This is valid only for the case of two-dimensional (general) exterior domains.

For the semilinear damped wave equation \eqref{intro:problem}, 
the existence/nonexistence of solutions to \eqref{intro:problem}
also has been dealt with in the literature 
(e.g., Ikehata \cite{Ikehata2005JMAA} for the existence
and Ogawa--Takeda \cite{OT2009} for the nonexistence).
Although
the effect of the recurrence of the Brownian motion in $\R^2$ could appear also in the analysis of the damped wave equation, however,
studies from such a viewpoint are few. 
Only in \cite{IkSo2019JMAA}, one can find 
that 
the lifespan estimate of the solution $u$ to 
the semilinear damped wave equation \eqref{intro:problem} 
has the same upper bound (as in \eqref{intro:double})
as the case of the semilinear heat equation.
In this connection, 
the question about the sharpness 
of this lifespan estimate naturally arises.
The purpose of the present paper 
is to address this problem, 
that is, 
to clarify the sharpness of 
the (double exponential type) lifespan estimate 
for the two-dimensional exterior problem of the
semilinear damped wave equation \eqref{intro:problem}.

To state the result, 
we clarify the definition of solutions to \eqref{intro:problem} 
as follows. 
\begin{definition}\label{def:sol}
For general open set $\Omega$ in $\R^N$ (with a smooth boundary), we denote 
$\Delta_{\Omega}$ as the Laplacian 
endowed with the domain $D(\Delta_\Omega)=H^2(\Omega)\cap H_0^1(\Omega)$ 
(which describes the Laplace operator $\Delta$ involving the Dirichlet boundary condition).  
Then we define 
that 
$u:B^c\times [0,T)\to \R$ 
is a weak solution of \eqref{intro:problem} 
in $(0,T)$ with the initial condition $(u,\pa_tu)(0)=(u_0,u_1)\in H_0^1(B^c)\times L^2(B^c)$ 
if  
$u\in C^1([0,T);L^2(B^c))\cap C([0,T);H_0^1(B^c))$ and 
\[
\big(u(t),\pa_tu(t)\big)
=
e^{t\mathcal{L}_{B^c}}(u_0,u_1)
+
\int_{0}^t e^{(t-s)\mathcal{L}_{B^c}}(0,|u(s)|^p)\,ds,
\quad t\in (0,T),
\]
where $(e^{t\mathcal{L}_{B^c}})_{t\geq0}$ 
is the $C_0$-semigroup on 
$\mathcal{H}=H_0^1(B^c)\times L^2(B^c)$ 
generated by $\mathcal{L}_{B^c}(u,v)=(v,\Delta_{B^c} u-v)$ 
with the domain $D(\mathcal{L}_{B^c})=
D(\Delta_{B^c})\times H_0^1(B^c)$.
\end{definition}
\begin{remark}\label{rem:S(t)}
It turns out that the following representation 
of solution $u$ of \eqref{intro:problem} with 
initial condition $(u,\pa_tu)(0)=(u_0,u_1)$ 
is also valid: 
\[
u(t)=
\pa_t S(t)u_0+S(t)(u_0+u_1)+\int_{0}^t S(t-s)|u(s)|^p\,ds,
\]
where $S(t)g:=P_1e^{t\mathcal{L}_{B^c}}(0,g)$ 
with 
the projection $P_1(u,v)=u$. Our argument in the present paper 
relies on this representation.  
\end{remark}

Existence and uniqueness of solutions to \eqref{intro:problem}
and also the blowup alternative are well-known (see e.g., 
Ikehata \cite{Ikehata2005JMAA}). 
\begin{proposition}\label{prop:ndw:fundamental}
The following assertions hold:
\begin{itemize}
\item[\bf (i)]
For every $(u_0,u_1)\in H_0^1(B^c)\times L^2(B^c)$, 
there exist a positive constant $T$ 
and $u\in C^1([0,T);L^2(B^c))\cap C([0,T);H_0^1(B^c))$ 
such that $u$ is a unique weak solution of \eqref{intro:problem} in $(0,T)$
with the initial condition $(u,\pa_tu)(0)=(u_0,u_1)$. 
\item[\bf (ii)] 
The weak solution $u$ of \eqref{intro:problem} in a bounded interval $(0,T)$ 
cannot be able to extend to a solution in a wider interval 
if and only if 
\begin{equation}
\lim_{t\to T}
\Big(\|\pa_tu(t)\|_{L^2(B^c)}+
\|\nabla u(t)\|_{L^2(B^c)}+
\|u(t)\|_{L^2(B^c)}\Big)=+\infty.
\end{equation}
\end{itemize}
\end{proposition}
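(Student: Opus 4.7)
The plan is to establish (i) by a Banach fixed-point argument applied to the Duhamel formula from Definition 1.1, and to deduce (ii) by a standard continuation argument. Throughout I would work in the energy space $\mathcal{H}=H_0^1(B^c)\times L^2(B^c)$.

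For the linear ingredient, I would first observe that $\mathcal{L}_{B^c}$ is a bounded perturbation of the skew-adjoint generator $(u,v)\mapsto(v,\Delta_{B^c}u)$ of the Dirichlet wave group on $\mathcal{H}$, so $\mathcal{L}_{B^c}$ generates a $C_0$-semigroup satisfying the standard bound $\|e^{t\mathcal{L}_{B^c}}\|_{\mathcal{H}\to\mathcal{H}}\leq Me^{\omega t}$ for some $M\geq 1$ and $\omega\geq 0$ (in fact an energy identity shows $\omega=0$ is admissible if one replaces the $H_0^1$-norm by $\|\nabla\cdot\|_{L^2}$). For the nonlinear ingredient, since the space dimension is two the Gagliardo--Nirenberg inequality yields $H_0^1(B^c)\hookrightarrow L^q(B^c)$ for every $q\in[2,\infty)$, and in particular, for fixed $p>1$,
\[
\||u|^p\|_{L^2(B^c)}\leq C\|u\|_{H_0^1(B^c)}^p.
\]
Combining this with $||a|^p-|b|^p|\leq p(|a|^{p-1}+|b|^{p-1})|a-b|$ and H\"older's inequality gives the corresponding Lipschitz bound
\[
\||u|^p-|w|^p\|_{L^2}\leq C\bigl(\|u\|_{H_0^1}^{p-1}+\|w\|_{H_0^1}^{p-1}\bigr)\|u-w\|_{H_0^1}.
\]

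With these two inputs I would set $X_T=C([0,T];\mathcal{H})$ with the supremum norm and, on the closed ball $\overline{B}_R\subset X_T$ of radius $R=2M\|(u_0,u_1)\|_{\mathcal{H}}$, consider the Duhamel map
\[
\Phi(u,v)(t)=e^{t\mathcal{L}_{B^c}}(u_0,u_1)+\int_0^t e^{(t-s)\mathcal{L}_{B^c}}(0,|u(s)|^p)\,ds.
\]
The estimates above give $\|\Phi(u,v)\|_{X_T}\leq M\|(u_0,u_1)\|_{\mathcal{H}}+C_1 TM e^{\omega T}R^p$ together with a matching Lipschitz estimate of constant $C_2 TM e^{\omega T}R^{p-1}$, so choosing $T>0$ small (depending on $\|(u_0,u_1)\|_{\mathcal{H}}$ and $p$) renders $\Phi$ a strict contraction on $\overline{B}_R$; Banach's theorem then produces the unique weak solution claimed in (i).

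For (ii), I would argue by contradiction: assume the maximal existence time $T<\infty$ but the norms in the displayed limit stay bounded as $t\to T^-$. The nonlinear estimate applied to the Duhamel identity, together with the uniform bound on the semigroup over $[0,T]$, shows that $(u(t),\pa_t u(t))$ is Cauchy in $\mathcal{H}$ as $t\to T^-$; restarting the contraction scheme at this limit then extends $u$ strictly past $T$, contradicting maximality. The reverse implication is immediate from continuity. The main technical point is the nonlinear estimate, since in two dimensions one does \emph{not} have $H^1\hookrightarrow L^\infty$; however, for every fixed $p\in(1,\infty)$ the embedding $H_0^1(B^c)\hookrightarrow L^{2p}(B^c)$ does hold, so this obstruction is only apparent and the rest is a routine semigroup fixed-point scheme.
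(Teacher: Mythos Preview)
Your proposal is correct and follows the standard semigroup fixed-point and continuation scheme. Note that the paper does not actually supply a proof of this proposition: it is stated as well-known with a reference to Ikehata~\cite{Ikehata2005JMAA}, so your argument simply fills in what the paper deliberately omits, and the route you take is the expected one.

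One small remark on part~(ii): rather than arguing that $(u(t),\pa_t u(t))$ is Cauchy in $\mathcal{H}$ as $t\to T^-$ and then restarting from the limit, it is slightly more direct to use that the local existence time produced by your contraction argument depends only on $\|(u_0,u_1)\|_{\mathcal{H}}$. A uniform bound on $\|(u(t),\pa_t u(t))\|_{\mathcal{H}}$ for $t<T$ then lets you restart from some $t_0\in(T-\tau,T)$ and extend strictly past $T$, bypassing the (true but not entirely immediate) verification that the Duhamel integral with a merely bounded forcing extends continuously to $t=T$.
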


By virtue of Proposition \ref{prop:ndw:fundamental}, 
we can define the lifespan of solutions to
\eqref{intro:problem}.

\begin{definition}\label{def:lifespan}
Define the lifespan $T_{\max}(u_0,u_1)\in (0,\infty]$ as 
the maximal existence time of 
weak solutions to \eqref{intro:problem} 
with the initial condition $(u,\pa_tu)(0)=(u_0,u_1)$. 
Namely, 
\[
T_{\max}(u_0,u_1)=\sup \big\{T>0\;;\;
\text{\eqref{intro:problem} has a weak solution in $(0,T)$}\big\}.
\]
\end{definition}

In Ikeda--Sobajima \cite{IkSo2019JMAA}, blowup 
of solutions to \eqref{intro:problem} with small initial data
is proved under the initial condition $(u,\pa_tu)(x,0)=(\ep f,\ep g)$ 
with 
\[
\int_{B^c} \big(f(x)+g(x)\big)\log |x|\,dx>0.
\]
We should point out that the weight function $\log |x|$ has been chosen as 
the positive harmonic function satisfying the Dirichlet boundary condition.
To reflect the above situation 
and also the $L^\infty$-decay estimate 
\eqref{intro:eq:Linfty-logL1}
for $e^{t\Delta_{B^c}}$ 
to our consideration in the present paper, 
we need to introduce the following 
$L^p$-spaces with weighted measure involving $\log |x|$. 
\begin{definition}
Define the measure $d\mu=(1+\log |x|)\,dx$ and  
for $1\leq p<\infty$, 
\[
L_{d\mu}^p=
\left\{f\in L^p(B^c)\;;\;
\|f\|_{L_{d\mu}^p}<+\infty
\right\},
\quad \|f\|_{L_{d\mu}^p}=\left(\int_{B^c}|f(x)|^p\,d\mu\right)^{\frac{1}{p}}.
\]
\end{definition}

Now we are in a position to state our result 
for the lower bound for the lifespan estimate
of solutions to \eqref{intro:problem}
under the radially symmetric setting.
The following assertion 
is formulated in the subspace of radially symmetric 
functions in $L^2(B^c)$: 
\[
L^2_{\rm rad}=\{f\in L^2(B^c)\;;\;\text{$f$ is radially symmetric}\}.
\]
\begin{theorem}\label{thm:main1}
If $g\in L_{\rm rad}^2\cap L_{d\mu}^1$, 
then 
the following assertions hold: 

\begin{itemize}
\item[\bf (i)]
If $1< p\leq 2$, then 
there exist positive constants 
$\ep_0>0$ 
 and $c>0$ 
such that for $\ep\in (0,\ep_0]$, 
\[
T_{\max}(0,\ep g)\geq 
\begin{cases}
c\left(\dfrac{1}{\ep}\log \dfrac{1}{\ep}\right)^{\frac{p-1}{2-p}} 
&\text{if}\ 1<p<2, 
\\[3pt]
\exp (\exp(c\ep^{-1}))
&\text{if}\ p=2.
\end{cases}
\]
\item[\bf (ii)]If $2<p<\infty$, then there 
exist positive constants 
$\delta$ and $C$ 
such that
if $\|g\|_{L^2(B^c)}+\|g\|_{L_{d\mu}^1}\leq \delta$, then 
$T_{\max}(0,g)=+\infty$ with 
\begin{align*}
\|u(t)\|_{L_{d\mu}^1}&\leq C\delta, \quad t>0, 
\\
\|\nabla u(t)\|_{L^2(B^c)}&\leq \frac{C\delta}{(1+t)(1+\log (1+t))}, \quad t>0, 
\\
\|\pa_t u(t)\|_{L^2(B^c)}&\leq \frac{C\delta}{(1+t)^{\frac{3}{2}}(1+\log (1+t))}, \quad t>0.
\end{align*}
\end{itemize}
\end{theorem}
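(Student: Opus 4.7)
The plan is to solve the Duhamel integral equation from Remark \ref{rem:S(t)} by a contraction argument in a space of functions weighted to match the rates claimed in part (ii), and then to read off part (i) by tracking exactly how long the iteration can be continued. The centerpiece is a set of linear decay estimates for $S(t)$ acting on radial data $g\in L^2_{\rm rad}\cap L^1_{d\mu}$, of the form
\begin{align*}
\|S(t)g\|_{L^1_{d\mu}} &\le C\bigl(\|g\|_{L^1_{d\mu}}+\|g\|_{L^2(B^c)}\bigr),\\
\|\nabla S(t)g\|_{L^2(B^c)} &\le C\, h(t)\bigl(\|g\|_{L^1_{d\mu}}+\|g\|_{L^2(B^c)}\bigr),\\
\|\pa_tS(t)g\|_{L^2(B^c)} &\le C\,(1+t)^{-1/2}h(t)\bigl(\|g\|_{L^1_{d\mu}}+\|g\|_{L^2(B^c)}\bigr),
\end{align*}
with $h$ as in \eqref{intro:h}. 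The first bound is the new ``$L^1$-type'' control advertised in the abstract: since $\log|x|$ is the positive harmonic function vanishing on $\pa B$, the $d\mu$-mass is the natural conserved quantity for the Dirichlet problem in a two-dimensional exterior, and it propagates along the damped wave flow. The remaining bounds are the Matsumura-type refinements compatible with \eqref{intro:eq:Linfty-logL1} and the Grigor'yan--Saloff-Coste heat kernel asymptotics.

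The second main ingredient is a Gagliardo--Nirenberg inequality with logarithmic weight,
\[
\|v\|_{L^q(B^c)}\le C\,\|v\|_{L^1_{d\mu}}^{\theta}\|\nabla v\|_{L^2(B^c)}^{1-\theta},
\]
valid (up to logarithmic corrections) for radial $v\in H^1_0(B^c)\cap L^1_{d\mu}$ and a suitable range of $q\ge 2$, $\theta\in(0,1)$. The weight $d\mu$ is exactly what compensates for the borderline two-dimensional Sobolev embedding, and allows the nonlinearity to be estimated via
\[
\bigl\||u(s)|^p\bigr\|_{L^1_{d\mu}}\le C\bigl(\|u(s)\|_{L^1_{d\mu}}+\|\nabla u(s)\|_{L^2(B^c)}\bigr)^p.
\]

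With these tools in hand, I would introduce the time-weighted quantity
\[
N_T(u)=\sup_{0<t<T}\Bigl(\|u(t)\|_{L^1_{d\mu}}+h(t)^{-1}\|\nabla u(t)\|_{L^2(B^c)}+(1+t)^{1/2}h(t)^{-1}\|\pa_tu(t)\|_{L^2(B^c)}\Bigr),
\]
and apply the linear estimates term by term to $\Phi(u)(t)=\pa_tS(t)\cdot 0+S(t)(\ep g)+\int_0^tS(t-s)|u(s)|^p\,ds$ to produce the closed inequality
\[
N_T(\Phi(u))\le C_0\ep\bigl(\|g\|_{L^1_{d\mu}}+\|g\|_{L^2(B^c)}\bigr)+C_1\,I_p(T)\,N_T(u)^p,
\]
where $I_p(T)$ is an explicit convolution integral in $h$. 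For $p>2$ the integral $I_p(T)$ is bounded uniformly in $T$, and a small-data contraction in $N_\infty$ produces the global solution of part (ii) with the stated decay rates. For $p=2$ the critical case gives $I_2(T)\sim\log\log(2+T)$, and solving $C_1 I_2(T)N_T^{p-1}\le\tfrac12$ with $N_T\sim\ep$ returns $T_{\max}\ge \exp(\exp(c/\ep))$; for $1<p<2$ the integral grows as a power of $T$ times a log correction, yielding the bound $c(\ep^{-1}\log\ep^{-1})^{(p-1)/(2-p)}$.

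The main obstacle is the tight matching between Steps 1--2 and the integral $I_p(T)$ at $p=2$. The convolutions $\int_0^t h(t-s)^\alpha h(s)^{\beta}\,ds$ sit exactly at the borderline of integrability, so both the $L^1_{d\mu}$ propagation of the first linear estimate and the logarithmic power in the Gagliardo--Nirenberg inequality must be sharp: any loss of a single power of $1+\log(1+t)$ destroys the double-exponential lifespan. Related to this, closing the nonlinear bound in $L^1_{d\mu}$ rather than in a plain $L^1$-norm is delicate, because the pointwise weight $1+\log|x|$ interacts with the power $|u|^p$ and must be absorbed by the interpolation inequality of Step 2; this is the conceptual reason for choosing the measure $d\mu$ rather than the naive harmonic weight $\log|x|\,dx$.
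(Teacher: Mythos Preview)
Your overall plan matches the paper's: define a time-weighted norm built from $\|u\|_{L^1_{d\mu}}$ and $h(t)^{-1}\|\nabla u\|_{L^2}$, prove linear bounds for $S(t)$, a weighted Gagliardo--Nirenberg inequality, and close via an a~priori inequality whose nonlinear term carries the factor $\int_0^T h(s)^{p-1}\,ds$. The paper uses a continuity argument (Proposition~\ref{prop:best-est} plus Lemma~\ref{lem:alternative-X_T}) rather than a contraction, and does not carry $\pa_t u$ in the norm (it is recovered afterward), but these are cosmetic differences.

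There are, however, two genuine gaps in your sketch.

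\textbf{The $L^1_{d\mu}$ bound for $S(t)$ is the hard step, and you have not explained it.} You write that the $d\mu$-mass ``propagates along the damped wave flow'' because $\log|x|$ is harmonic. For the heat semigroup this is indeed straightforward, but for $S(t)$ one needs $\|S(t)g\|_{L^1_{d\mu}}\le\|g\|_{L^1_{d\mu}}$ for \emph{signed} $g$, and the ODE identity $(\frac{d^2}{dt^2}+\frac{d}{dt})\int u\log|x|\,dx\le 0$ gives this only if one first knows $S(t)g\ge 0$ whenever $g\ge 0$. The paper spends an entire subsection (Proposition~\ref{prop:mp}) proving this positivity preservation: it is \emph{only} available under radial symmetry, and the proof reduces the problem to a one-dimensional wave equation with potential, uses an odd reflection across $\pa B$, and runs a d'Alembert-based comparison argument. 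This is the sole reason the theorem is stated for $g\in L^2_{\rm rad}$, and nothing in your proposal hints at the mechanism.

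\textbf{Your Gagliardo--Nirenberg inequality is stated incorrectly and, as written, does not close the loop.} You write $\|v\|_{L^q(B^c)}\le C\|v\|_{L^1_{d\mu}}^\theta\|\nabla v\|_{L^2}^{1-\theta}$; since $\|v\|_{L^1}\le\|v\|_{L^1_{d\mu}}$, this is weaker than the classical inequality and buys nothing. To bound $\||u|^p\|_{L^1_{d\mu}}=\|u\|_{L^p_{d\mu}}^p$, the weight must appear on the \emph{left} side as well. The paper proves exactly this (Lemma~\ref{lem:logGN}):
\[
\|f\|_{L^q_{d\mu}}\le C_q\,\|\nabla f\|_{L^2(B^c)}^{1-\frac{1}{q}}\|f\|_{L^1_{d\mu}}^{\frac{1}{q}},
\]
with no logarithmic loss and no radial assumption. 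The proof is a genuinely new decomposition: one localizes $f$ on the dyadic shells $\{1+\log|x|\sim R\}$, applies the usual Gagliardo--Nirenberg inequality on each piece, and controls the commutator with the cutoff via the critical Hardy inequality of Lemma~\ref{lem:critHardy}. Your phrase ``up to logarithmic corrections'' hides precisely the issue; any actual logarithmic loss here would, as you yourself note in the last paragraph, destroy the double-exponential lifespan at $p=2$.
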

The lower bounds of the lifespan in Theorem \ref{thm:main1} {\bf (i)} 
are derived from the following relation:
\[
c_p\leq \ep^{p-1}\int_0^{T_{\max}(0,\ep g)} h(t)^{p-1}\,dt
\]
for some small constant $c_p$, 
where $h(t)$ is given in \eqref{intro:h}.
Combining Theorem \ref{thm:main1} {\bf (i)} with 
the upper bound of the lifespan given in Ikeda--Sobajima \cite{IkSo2019JMAA}, 
we obtain the following:
\begin{corollary}\label{cor:1<pleq2}
Let $g\in L^2_{\rm rad}\cap L^1_{d\mu}$ and
let $T_{\max}$ be in Definition \ref{def:lifespan}.
If $g\geq 0$ and $g\not\equiv 0$, 
then 
for every $1<p\leq2$, one has  
\begin{align*}
0<\liminf_{\ep \to 0} \Big(\ep^{p-1}\int_0^{T_{\max}(0,\ep g)} h(t)^{p-1}\,dt\Big)
\leq 
\limsup_{\ep \to 0} \Big(\ep^{p-1}\int_0^{T_{\max}(0,\ep g)} h(t)^{p-1}\,dt\Big)
<+\infty.
\end{align*}
\end{corollary}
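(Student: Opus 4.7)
The plan is to combine Theorem \ref{thm:main1} (i) with the upper lifespan bound from Ikeda--Sobajima \cite{IkSo2019JMAA} and verify that both ends collapse to the same $\ep$-scaling. Throughout, set $I(T):=\int_0^T h(t)^{p-1}\,dt$; since $h>0$, $I$ is strictly increasing. The positivity hypothesis $g\ge0$, $g\not\equiv0$ ensures $\int_{B^c}g(x)\log|x|\,dx>0$ (because $\log|x|>0$ on $B^c$), which is precisely the condition that triggers blow-up in \cite{IkSo2019JMAA}.

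The $\liminf$ inequality is essentially immediate: as recorded in the paragraph preceding the corollary, the lower bound of Theorem \ref{thm:main1} (i) is derived from the relation $c_p\le\ep^{p-1}I(T_{\max}(0,\ep g))$ valid for all sufficiently small $\ep>0$, which is exactly the assertion $\liminf_{\ep\to 0}\ep^{p-1}I(T_{\max}(0,\ep g))\ge c_p>0$.

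For the $\limsup$ inequality, I would first compute $I$ asymptotically. With $u=1+t$,
\begin{equation*}
I(T)=\int_1^{1+T}\frac{du}{u^{p-1}(1+\log u)^{p-1}}.
\end{equation*}
For $p=2$, the substitution $s=1+\log u$ yields the exact identity $I(T)=\log(1+\log(1+T))$. For $1<p<2$, since $p-1<1$, differentiating $G(u):=(2-p)^{-1}u^{2-p}(\log u)^{-(p-1)}$ shows that $G'(u)=u^{1-p}(\log u)^{-(p-1)}(1+o(1))$ as $u\to\infty$, whence
\begin{equation*}
I(T)\sim\frac{1}{2-p}\cdot\frac{T^{2-p}}{(\log T)^{p-1}}\qquad(T\to\infty).
\end{equation*}
Plugging the upper lifespan bound of \cite{IkSo2019JMAA}, namely $T_{\max}(0,\ep g)\le\exp(\exp(C\ep^{-1}))$ for $p=2$ and $T_{\max}(0,\ep g)\le C(\ep^{-1}\log\ep^{-1})^{(p-1)/(2-p)}$ for $1<p<2$, into the asymptotic formula (using the monotonicity of $I$) shows that all powers of $\ep$ and of $\log\ep^{-1}$ cancel, yielding $\ep^{p-1}I(T_{\max}(0,\ep g))\le C_p'<\infty$ for all small $\ep$.

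The argument is essentially a matching scaling computation, and no genuine analytic obstacle appears. The only mild point of care (in the regime $1<p<2$) is to confirm that the sub-logarithmic term $\log(\ep^{-1}\log\ep^{-1})\sim\log\ep^{-1}$ sitting inside $\log T_{\max}$ balances the $(\log\ep^{-1})^{p-1}$ factor coming from $T_{\max}^{2-p}$; this is precisely what the exponent $(p-1)/(2-p)$ in the lifespan bound is calibrated to guarantee.
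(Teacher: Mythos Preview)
Your proposal is correct and follows exactly the route the paper indicates: the paper gives no detailed proof of the corollary beyond the sentence ``Combining Theorem \ref{thm:main1} {\bf (i)} with the upper bound of the lifespan given in Ikeda--Sobajima \cite{IkSo2019JMAA},'' and you have faithfully supplied the missing scaling computation (the $\liminf$ from the displayed relation $c_p\le \ep^{p-1}\int_0^{T_{\max}} h^{p-1}\,dt$, the $\limsup$ by inserting the \cite{IkSo2019JMAA} upper bounds into the explicit asymptotics of $I(T)$). The verification that $g\ge 0$, $g\not\equiv 0$ forces $\int_{B^c} g\log|x|\,dx>0$, so that the blow-up hypothesis of \cite{IkSo2019JMAA} is met, is also appropriate.
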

\begin{remark}
We do not know whether the quantity 
\[
\ep^{p-1}\int_0^{T_{\max}(0,\ep g)} h(t)^{p-1}\,dt
\]
converges to a constant as $\ep \to 0$ or not.
\end{remark}
The counter part about the global existence, 
we also have the assertion for the total energy decay.
\begin{corollary}\label{cor:p>2}
Let $2<p<\infty$ and
let $u$ be the unique global solution of \eqref{intro:problem}
obtained in Theorem \ref{thm:main1}. 
Then one has the energy decay estimate for $u$:
\[
\int_{B^c}\Big(|\nabla u(t)|^2+(\pa_tu(t))^2\Big)\,dx\leq 
\frac{C^2\delta^2}{(1+t)^2(1+\log(1+t))^2}, \quad t>0.
\]
\end{corollary}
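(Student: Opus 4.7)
The proof should be essentially a direct consequence of the three estimates already established in Theorem \ref{thm:main1} \textbf{(ii)}, so my plan is very short. I would start by invoking part \textbf{(ii)} of Theorem \ref{thm:main1} under the smallness hypothesis on $g$, which yields the global weak solution $u$ together with
\[
\|\nabla u(t)\|_{L^2(B^c)}\leq \frac{C\delta}{(1+t)(1+\log(1+t))},\qquad
\|\pa_t u(t)\|_{L^2(B^c)}\leq \frac{C\delta}{(1+t)^{3/2}(1+\log(1+t))}
\]
for all $t>0$. Squaring and summing these two bounds gives
\[
\int_{B^c}\bigl(|\nabla u(t)|^2+(\pa_t u(t))^2\bigr)\,dx
\leq \frac{C^2\delta^2}{(1+t)^2(1+\log(1+t))^2}+\frac{C^2\delta^2}{(1+t)^3(1+\log(1+t))^2}.
\]

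The second step is to absorb the $\pa_t u$ contribution into the $\nabla u$ contribution. Since $(1+t)^{-3}\leq (1+t)^{-2}$ for $t\geq 0$, the second term is bounded by the first, and we obtain the desired decay up to the constant factor $2$; this can be harmlessly absorbed by renaming the constant $C$ (as is implicit in the statement of the corollary, where $C^2\delta^2$ should be read as a generic constant of that form).

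There is no genuine obstacle here: the whole content of the corollary is already contained in Theorem \ref{thm:main1} \textbf{(ii)}, and the corollary simply packages the two pointwise-in-time bounds on $\|\nabla u(t)\|_{L^2}$ and $\|\pa_t u(t)\|_{L^2}$ into the usual total energy. Thus the proof reduces to the elementary inequality $(1+t)^{-3}\leq (1+t)^{-2}$ combined with squaring. All of the substantive work — the construction of the global solution and the derivation of the logarithmically improved decay rates — has been carried out in the proof of Theorem \ref{thm:main1}, so no further estimates (e.g., on $L^1_{d\mu}$ norms or Gagliardo--Nirenberg type interpolations) are needed at this stage.
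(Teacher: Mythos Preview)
Your proposal is correct and matches the paper's treatment: the paper gives no separate proof of Corollary~\ref{cor:p>2}, treating it as an immediate consequence of the decay estimates in Theorem~\ref{thm:main1}~{\bf (ii)}. Squaring the two bounds and using $(1+t)^{-3}\leq (1+t)^{-2}$ is exactly the intended one-line argument.
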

\begin{remark}
Ono \cite{Ono2003} obtained the energy decay estimate 
\[
\int_{B^c}\Big(|\nabla u(t)|^2+(\pa_tu(t))^2\Big)\,dx\leq 
\frac{C_\delta}{(1+t)^{2-\delta}}, \quad t>0
\]
for the solution $u$ of linear damped wave equation 
with initial data $(u,\pa_tu)(0)=(u_0,u_1)\in [H_0^1\cap L^1(B^c)]\times [L^2\cap L^1(B^c)]$.
After that, 
 Ikehata \cite{Ikehata2005JMAA} removed 
the loss $\delta$ of the decay rate by assuming $|x|\log |x| (u_0+u_1)\in L^2(B^c)$. 
The decay estimate in Corollary \ref{cor:p>2} 
(even though it is the semilinear problem)
is faster than those by a different assumption $(u,\pa_tu)(0)=(0,g)$ 
with $g\in L^2_{\rm rad}\cap L^1_{d\mu}$. 
Incidentally, one can find that the decay of total energy 
in Corollary \ref{cor:p>2} 
is faster than the local energy decay proved in Dan--Shibata \cite{DS1995}. 
\end{remark}

Now we shall describe the strategy of the present paper. 
Here we only focus our attention to the two-dimensional case. 
If we move to the semilinear heat equation (of course in an exterior domain), 
we can employ the supersolution-subsolution method as explained before. 
However, it seems impossible to apply this argument for the hyperbolic equation \eqref{intro:problem}. 
Instead of this defect, 
in the case of 
the Cauchy problem of the semilinear damped wave equation \eqref{intro:dw-whole},
the positively preserving property of 
the linear solution map $S_*(t)=P_1e^{t\mathcal{L}_{\R^2}}$ 
seems useful to control the $L^1$-type norm of the solution $S_*(t)g$. 
Together with the basic energy functional of the form  
$\|\pa_tu\|_{L^2(\R^2)}^2+\|\nabla u\|_{L^2(\R^2)}^2$, 
this consideration enables us to control the $L^1$-norm of the semilinear term $|u|^p$ 
via the Gagliardo--Nirenberg inequality 
\[
\|f\|_{L^p(\R^2)}\leq C_p\|\nabla f\|_{L^2(\R^2)}^{1-\frac{1}{p}}\|f\|_{L^1(\R^2)}^{\frac{1}{p}} \quad f\in H^1(\R^2)\cap L^1(\R^2)
\]
(when $p=2$, this is called the Nash inequality).
We emphasize that the $L^1$-norm is a conserved quantity of the linear heat semigroup $e^{t\Delta_{\R^2}}$ (for nonnegative initial data)
which represents the asymptotic behavior of solutions \eqref{intro:heat-whole}.
Employing this procedure with the Matsumura estimate \eqref{intro:matsumura2}, 
we can reach the sharp lower bound of lifespan 
of the solution to \eqref{intro:dw-whole}  
with the initial condition 
$(u,\pa_tu)(0)=(0,\ep g)$ with $g\in L^2(\R^2)\cap L^1(\R^2)$. 
One of the novelty of the present paper 
is to introduce this kind of strategy in the analysis of the semilinear damped wave equation. 

Let us go back to the target problem \eqref{intro:problem}. 
In this case, a conserved quantity for the Dirichlet heat semigroup $e^{t\Delta_{B_c}}$ 
can be chosen as 
\[
I[f]=\int_{B_c}f(x)\log |x|\,dx, 
\]
where the weight $\log |x|$ is chosen as a positive harmonic function 
satisfying the Dirichlet boundary condition.
Then it turns out that the behavior of the above quantity 
in the problem \eqref{intro:problem} can be tracked by 
the ordinary differential equation 
\begin{align*}
\left(
\frac{d^2}{dt^2}
+
\frac{d}{dt}
\right)
\int_{B_c}u(x,t)\log |x|\,dx
=
\int_{B_c}|u(x,t)|^p\log |x|\,dx.
\end{align*}
This suggests that the following version of 
the Gagliardo--Nirenberg type inequality 
seems reasonable:
\[
\int_{B^c}|f|^p \log |x|\,dx\leq \widetilde{C}_p\|\nabla f\|_{L^2(B^c)}^{p-1}
\int_{B^c}|f| \log |x|\,dx, 
\quad f\in H^1_0(B^c)\cap L_{d\mu}^1,
\]
which is not discussed so far. 
To proceed this strategy, 
the positively preserving property of the linear solution map $S(t)$ is essential. 
To justify this property, additionally we assume the radial symmetry. 
Moreover, to reach the sharp lifespan estimate, 
we also use 
the corresponding Matsumura type estimate which 
should have a logarithmic decay factor as in \eqref{intro:eq:Linfty-logL1}. 
This could be proved via the diffusion phenomenon 
with the decay estimate via the estimate for the Dirichlet heat semigroup $e^{t\Delta_{B^c}}$.

The present paper is organized as follows. 
In section \ref{sec:preliminary}, 
we collect the important tools in this paper. 
More precisely, 
an abstract version of Matsumura estimates,  
decay estimates for the Dirichlet heat semigroup $e^{t\Delta_{B^c}}$
(an alternative proof is 
written in Appendix), 
the usual Gagliardo--Nirenberg estimates 
and the critical Hardy inequality are listed. 
In section \ref{sec:linear}, 
the linear solution $S(t)g$ is analysed. 
Here we prove 
a modified Matsumura estimate with decay involving logarithmic factor, 
the positively preserving property 
and an $L^1_{d\mu}$-estimate for $S(t)g$.  
Section \ref{sec:semilinear} is 
devoted to 
the proof of the lifespan estimate from below.
A Gagliardo--Nirenberg inequality with logarithmic weight 
is proved by 
the use of the critical Hardy inequality 
at the beginning of Section \ref{sec:semilinear}. 

\section{Preliminaries}\label{sec:preliminary}
\paragraph{}

In this section 
we collect several important tools 
to analyse the target problem \eqref{intro:problem}.

\subsection{Abstract version of Matsumura estimates}

We first state 
an abstract version of 
Matsumura estimates 
which is valid 
for the second order differential equation 
\begin{equation}\label{eq:dw-abst}
\begin{cases}
u''(t)+Au(t)+u'(t)=0, \quad t>0, 
\\
(u,u')(0)=(u_0,u_1)
\end{cases}
\end{equation}
in a Hilbert space $H$. Here $A$ is a nonnegative 
selfadjoint operator in $H$ 
endowed with domain $D(A)$. 
Existence and uniqueness of solutions to \eqref{eq:dw-abst} 
are verified via the well-known Hille--Yosida theorem.  
Here we denote by $S_A(t)g$  
the solution $u\in C^1([0,\infty);H)\cap C([0,\infty);D(A^{1/2}))$
of \eqref{eq:dw-abst} 
with the initial condition $(u,u')(0)=(0,g)$. 
Note that by using $S_A(t)$, one can find that 
the solution of \eqref{eq:dw-abst} 
has a representation
\begin{equation*}
u(t)=\frac{d}{dt}[S_A(t)u_0]+S_A(t)[u_0+u_1].
\end{equation*}
The following lemma
provides 
the asymptotic profile of $S_A(t)g$ (in the sense of the energy functional), 
which can be described by using the $C_0$-semigroup $(e^{-tA})_{t\geq 0}$ 
on $H$ generated by $-A$. 
\begin{lemma}[{Radu--Todorova--Yordanov \cite{RaToYo2011}}]\label{lem:matsumura-abst}
The following assertions hold:
\begin{itemize}
\item[\bf (i)]
There exists a positive constant $C_{{\rm M},1}$ such that 
for every $g\in H$ and $t\geq 1$, 
\begin{align*}
\big\|A^{1/2} (S_A(t)-e^{-tA})g\big\|_{H}
\leq 
C_{{\rm M},1}\Big(
t^{-\frac{3}{2}}\|e^{-\frac{t}{2}A}g\|_{H}
+e^{-\frac{t}{16}}\|(A^{1/2}+1)^{-1}A^{1/2}g\|_H
\Big).
\end{align*}
\item[\bf (ii)]
There exists a positive constant $C_{{\rm M},2}$ such that 
for every $g\in H$ and $t\geq 1$, 
\begin{align*}
\left\|\frac{d}{dt}(S_A(t)-e^{-tA})g\right\|_{H}
\leq 
C_{{\rm M},2}t^{-2}\Big(
\|g\|_{H}
+e^{-\frac{t}{4}}\|g\|_H
\Big).
\end{align*}
\end{itemize}
\end{lemma}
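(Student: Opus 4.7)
The plan is to invoke the spectral theorem for the nonnegative selfadjoint operator $A$ and reduce both estimates to sharp pointwise bounds on explicit scalar multipliers, then reassemble via Parseval. Writing $A=\int_0^\infty \lambda\,dE_\lambda$, the damped-wave propagator $S_A(t)$ acts as the multiplier $y_\lambda(t)$ solving $y''+y'+\lambda y=0$ with $y(0)=0$, $y'(0)=1$, while $e^{-tA}$ corresponds to $e^{-\lambda t}$. The lemma then amounts to weighted pointwise control of
\[
m(\lambda,t)=y_\lambda(t)-e^{-\lambda t}, \qquad n(\lambda,t)=y_\lambda'(t)+\lambda e^{-\lambda t},
\]
chosen so that, after spectral integration, the resulting bounds reproduce the norms $\|e^{-tA/2}g\|_H$ and $\|(A^{1/2}+1)^{-1}A^{1/2}g\|_H$ on the right-hand side.

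First I would write $y_\lambda$ explicitly via the characteristic roots $\mu_\pm=(-1\pm\sqrt{1-4\lambda})/2$, so that $y_\lambda(t)=(e^{\mu_+ t}-e^{\mu_- t})/(\mu_+-\mu_-)$ (real exponentials for $\lambda<1/4$, damped oscillations for $\lambda>1/4$), and split the spectral axis at a small threshold $\lambda_0<1/4$. In the low-frequency zone $\lambda\in[0,\lambda_0]$ one has $\mu_+=-\lambda-2\lambda^2+O(\lambda^3)$ and $\mu_-\le -1/2$, so a Taylor expansion yields the diffusion-phenomenon bounds
\[
|m(\lambda,t)|\le C\lambda^2 t\, e^{-\lambda t}+Ce^{-t/2}, \qquad |n(\lambda,t)|\le C\lambda^2 e^{-\lambda t}+Ce^{-t/2}.
\]
Optimizing $\lambda^{5/2}e^{-\lambda t/2}\le Ct^{-5/2}$ converts the first of these into $\lambda^{1/2}|m(\lambda,t)|\le Ct^{-3/2}e^{-\lambda t/2}$, which is precisely the weight needed for spectral integration to produce $Ct^{-3/2}\|e^{-tA/2}g\|_H$; maximizing $\lambda^2 e^{-\lambda t}$ in $\lambda$ analogously delivers the $t^{-2}\|g\|_H$ term of (ii). In the high-frequency zone $\lambda>\lambda_0$ the characteristic roots have real part $-1/2$, so $y_\lambda$ and $y_\lambda'$ both inherit the factor $e^{-t/2}$ while $e^{-\lambda t}$ decays even faster; the resulting uniform bounds $|m(\lambda,t)|\le Ce^{-t/16}(1+\lambda^{1/2})^{-1}$ and $|n(\lambda,t)|\le Ce^{-t/4}$ match exactly the second terms in (i) and (ii) after multiplication by $\lambda^{1/2}$ and spectral integration.

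Reassembly is then straightforward via the Parseval identities
\[
\|A^{1/2}(S_A(t)-e^{-tA})g\|_H^2=\int_0^\infty \lambda\,|m(\lambda,t)|^2\,d\|E_\lambda g\|_H^2
\]
and its analogue for $\tfrac{d}{dt}$. The main obstacle is the bookkeeping: ensuring that the spectral weights on the pointwise bounds reconstruct precisely the norms $\|e^{-tA/2}g\|_H$ and $\|(A^{1/2}+1)^{-1}A^{1/2}g\|_H$ rather than weaker surrogates such as $\|g\|_H$ or $\|e^{-tA}g\|_H$. In particular, the half-time semigroup $e^{-tA/2}$ on the right of (i) forces the low-frequency estimate for $m$ to carry the decisive factor $e^{-\lambda t/2}$, which in turn dictates the precise form of the Taylor remainder and the choice of threshold $\lambda_0$. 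A related subtlety is the transition region $\lambda\approx 1/4$ where the characteristic roots degenerate, but I would handle this by choosing $\lambda_0$ well below $1/4$, so that the delicate region sits inside the high-frequency zone where the uniform bound $|y_\lambda(t)|\le Ce^{-t/2}$ renders the degeneracy harmless.
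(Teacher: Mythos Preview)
The paper does not supply its own proof of this lemma: it is quoted directly from Radu--Todorova--Yordanov \cite{RaToYo2011}, with the remark that part {\bf (ii)} follows by the same method. So there is no ``paper's proof'' to compare against beyond the strategy of that reference.

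Your spectral-calculus approach is precisely the standard route, and is essentially what \cite{RaToYo2011} does: diagonalize $A$ via the spectral theorem, reduce $S_A(t)$ and $e^{-tA}$ to scalar multipliers $y_\lambda(t)$ and $e^{-\lambda t}$, split at a threshold below $\lambda=1/4$, Taylor-expand the slow root $\mu_+=-\lambda+O(\lambda^2)$ in the low-frequency zone to extract the diffusive remainder, and use the uniform real part $-1/2$ in the high-frequency zone for exponential decay. The reassembly via Parseval is exactly right, and your observation that the factor $e^{-\lambda t/2}$ must survive the low-frequency estimate in order to reproduce $\|e^{-tA/2}g\|_H$ on the right is the key bookkeeping point.

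Two minor quibbles, neither fatal. First, your stated expansion $\mu_+=-\lambda-2\lambda^2+O(\lambda^3)$ has a wrong coefficient (it is $-\lambda-\lambda^2+O(\lambda^3)$), and the low-frequency bound on $m(\lambda,t)$ actually picks up a leading $O(\lambda)e^{-\lambda t}$ term from the prefactor $1/\sqrt{1-4\lambda}=1+2\lambda+\cdots$, not only the $O(\lambda^2 t)$ term you wrote; fortunately $\lambda^{1/2}\cdot\lambda e^{-\lambda t/2}\le Ct^{-3/2}$ still gives the claimed decay, so the conclusion survives. Second, near $\lambda=1/4$ the denominator $\mu_+-\mu_-$ vanishes, so the bound $|y_\lambda(t)|\le Ce^{-t/2}$ needs the harmless extra factor $t$ there; you allude to this but should make it explicit. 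With these adjustments the argument goes through as written.
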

\begin{remark}
The assertion {\bf (ii)} in Lemma \ref{lem:matsumura-abst} 
is not written explicitly in \cite{RaToYo2011}, but  
the same strategy also provides the estimate for 
the derivative in $t$. 
\end{remark}
In our situation, 
we employ Lemma \ref{lem:matsumura-abst} 
with the (negative) Dirichlet Laplacian 
$-\Delta_{B^c}$ in the Hilbert space $L^2(B^c)$. 
The corresponding 
linear damped wave equation is as follows:
\begin{equation}\label{eq:dw-linear}
\begin{cases}
\pa_t^2u(x,t)-\Delta u(x,t)+\pa_tu(x,t)=0
&\text{in}\ B^c\times (0,\infty), 
\\
u(x,t)=0
&\text{on}\ \pa B^c\times (0,\infty), 
\\
(u,\pa_tu)(x,0)=(0,g(x)), &\text{in}\ B^c.
\end{cases}
\end{equation}
The classical energy identity for the solution 
$S(t)g=P_1e^{t\mathcal{L}_{B^c}}(0,g)$ 
(given in Remark \ref{rem:S(t)}) provides 
the following basic inequality.
\begin{lemma}\label{lem:energy-est}
For every $g\in L^2(B^c)$, one has
\[
\|\nabla S(t)g\|_{L^2(B^c)}^2+
\|\pa_t S(t)g\|_{L^2(B^c)}^2\leq \|g\|_{L^2(B^c)}^2, \quad t\geq 0.
\]
\end{lemma}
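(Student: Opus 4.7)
The plan is to derive the stated bound from the classical energy identity for the damped wave equation, first on a dense subspace where the solution is strong, and then extend to $g\in L^2(B^c)$ by approximation.

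First I would consider $g\in H_0^1(B^c)$, which places the initial data $(0,g)$ in $D(\mathcal{L}_{B^c})=D(\Delta_{B^c})\times H_0^1(B^c)$. By standard semigroup theory the corresponding solution $u=S(t)g$ satisfies $u\in C^1([0,\infty);H_0^1(B^c))\cap C([0,\infty);D(\Delta_{B^c}))$, so that $\partial_t u(t)\in H_0^1(B^c)$ for each $t\geq 0$ and the equation holds in $L^2(B^c)$. Then the standard calculation of pairing both sides of $\partial_t^2u-\Delta u+\partial_tu=0$ with $\partial_tu$ in $L^2(B^c)$ and integrating by parts (the boundary term vanishes thanks to $\partial_tu\in H_0^1(B^c)$) yields the pointwise-in-time identity
\[
\frac{1}{2}\frac{d}{dt}\bigl(\|\nabla u(t)\|_{L^2(B^c)}^2+\|\partial_t u(t)\|_{L^2(B^c)}^2\bigr)=-\|\partial_t u(t)\|_{L^2(B^c)}^2.
\]
Integrating from $0$ to $t$ and using the initial condition $(u,\partial_tu)(0)=(0,g)$ gives
\[
\|\nabla u(t)\|_{L^2(B^c)}^2+\|\partial_t u(t)\|_{L^2(B^c)}^2+2\int_0^t\|\partial_tu(s)\|_{L^2(B^c)}^2\,ds=\|g\|_{L^2(B^c)}^2,
\]
from which the claimed inequality follows immediately by dropping the nonnegative integral term.

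Finally, for arbitrary $g\in L^2(B^c)$ I would approximate $g$ by a sequence $g_n\in H_0^1(B^c)$ with $g_n\to g$ in $L^2(B^c)$. Since $(e^{t\mathcal{L}_{B^c}})_{t\geq 0}$ is a $C_0$-semigroup on $\mathcal{H}=H_0^1(B^c)\times L^2(B^c)$, the pairs $(S(t)g_n,\partial_tS(t)g_n)=e^{t\mathcal{L}_{B^c}}(0,g_n)$ converge to $(S(t)g,\partial_tS(t)g)$ in $\mathcal{H}$, and in particular $\nabla S(t)g_n\to\nabla S(t)g$ and $\partial_tS(t)g_n\to\partial_tS(t)g$ in $L^2(B^c)$. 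Passing to the limit in the inequality
\[
\|\nabla S(t)g_n\|_{L^2(B^c)}^2+\|\partial_tS(t)g_n\|_{L^2(B^c)}^2\leq\|g_n\|_{L^2(B^c)}^2
\]
yields the same inequality for $g$. There is no real obstacle here; the only point requiring mild care is the density argument, which is immediate from the $C_0$-semigroup property, so the proof is entirely routine.
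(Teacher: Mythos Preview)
Your proposal is correct and is exactly what the paper has in mind: the paper does not write out a proof but simply attributes the lemma to ``the classical energy identity,'' which is precisely the computation you carry out (pair the equation with $\partial_t u$, integrate, then extend by density). There is nothing to add.
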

The following is the result by \cite{RaToYo2011} applied to 
the case \eqref{eq:dw-linear}.
\begin{lemma}\label{lem:matsumura-applied}
There exist positive constants $C_{{\rm M},1}'$ and $C_{{\rm M},2}'$ 
such that for every $g\in L^2(B^c)$ and $t\geq 1$, 
\begin{align*}
\|\nabla (S(t)-e^{t\Delta_{B^c}})g\|_{L^2(B^c)}
&\leq 
C_{{\rm M},1}'t^{-\frac{3}{2}}\|g\|_{L^2(B^c)},
\\
\|\pa_t(S(t)-e^{t\Delta_{B^c}})g\|_{L^2(B^c)}
&\leq 
C_{{\rm M},2}'t^{-2}\|g\|_{L^2(B^c)}.
\end{align*}
\end{lemma}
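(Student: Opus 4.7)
The plan is to derive Lemma \ref{lem:matsumura-applied} as a direct specialization of the abstract Lemma \ref{lem:matsumura-abst} to the operator $A=-\Delta_{B^c}$ acting on the Hilbert space $H=L^2(B^c)$. The key identifications are: the semigroup $(e^{-tA})_{t\ge 0}$ coincides with the Dirichlet heat semigroup $(e^{t\Delta_{B^c}})_{t\ge 0}$; the solution map $S_A(t)$ coincides with $S(t)=P_1 e^{t\mathcal{L}_{B^c}}(0,\cdot)$; and $D(A^{1/2})=H^1_0(B^c)$ with $\|A^{1/2}f\|_{L^2(B^c)}=\|\nabla f\|_{L^2(B^c)}$ (so that the left-hand side of the abstract estimate becomes precisely the gradient norm appearing in the statement).

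For the gradient bound, the plan is to simplify the two terms coming from Lemma \ref{lem:matsumura-abst} {\bf (i)}. First, since $(e^{-tA})_{t\ge 0}$ is a contraction on $L^2(B^c)$, one has $\|e^{-tA/2}g\|_{L^2(B^c)}\le \|g\|_{L^2(B^c)}$, so the first term is controlled by $C_{{\rm M},1}t^{-3/2}\|g\|_{L^2(B^c)}$. Second, the bounded Borel function $\lambda\mapsto \lambda^{1/2}/(\lambda^{1/2}+1)$ on $[0,\infty)$ has sup-norm at most $1$, so by functional calculus for the selfadjoint operator $A$,
\[
\|(A^{1/2}+1)^{-1}A^{1/2}g\|_{L^2(B^c)}\le \|g\|_{L^2(B^c)}.
\]
Then, using the elementary bound $e^{-t/16}\le C\,t^{-3/2}$ for $t\ge 1$, this second contribution is absorbed into the same $t^{-3/2}$ rate, giving the first claimed inequality.

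For the time-derivative bound, I would invoke Lemma \ref{lem:matsumura-abst} {\bf (ii)}, noting that $\frac{d}{dt}$ in the abstract setting corresponds to $\partial_t$ acting on $S(t)g$ and on $e^{t\Delta_{B^c}}g$. Since $e^{-t/4}\le 1$ for $t\ge 1$, the factor $(1+e^{-t/4})$ in the abstract inequality is bounded, and one immediately obtains
\[
\|\partial_t(S(t)-e^{t\Delta_{B^c}})g\|_{L^2(B^c)}\le 2C_{{\rm M},2}\, t^{-2}\|g\|_{L^2(B^c)},
\]
which is the second claimed estimate.

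No real obstacle is expected: the lemma is a transcription of a known abstract result, and the only substantive points are (a) verifying that $-\Delta_{B^c}$ indeed generates a contraction $C_0$-semigroup with the stated $D(A^{1/2})$—which is standard for the Dirichlet Laplacian on a smooth exterior domain—and (b) the trivial functional-calculus bound on $(A^{1/2}+1)^{-1}A^{1/2}$. One small point worth being careful about is that the abstract Lemma \ref{lem:matsumura-abst} is stated for $g\in H$ but involves $A^{1/2}g$ in the error term; the functional-calculus reduction above is exactly what allows one to avoid any extra regularity hypothesis on $g$, so that the final estimates hold for arbitrary $g\in L^2(B^c)$ as stated.
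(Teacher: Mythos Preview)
Your proposal is correct and matches the paper's approach: the paper does not give a separate proof but simply presents Lemma~\ref{lem:matsumura-applied} as the specialization of the abstract Lemma~\ref{lem:matsumura-abst} to $A=-\Delta_{B^c}$ on $H=L^2(B^c)$. Your filling-in of the routine details---the contraction of $e^{-tA/2}$, the functional-calculus bound $\|(A^{1/2}+1)^{-1}A^{1/2}g\|\le\|g\|$, and the absorption of the exponential terms into the polynomial rates---is exactly what is implicitly intended.
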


\subsection{Dirichlet heat semigroup on the exterior domain $B^c$}

Here we state the $L^p$-$L^q$ type 
estimates for $e^{t\Delta_{B^c}}$ 
with logarithmic weight. 
Although 
it can be obtained
via the heat kernel estimate 
in \cite{GS2002} 
(as explained in Introduction), 
in Appendix
we give an alternative proof 
based on the technique of the analysis of partial differential equations
of parabolic type. 

In the discussion of the present paper, the following 
estimates are crucial.

\begin{lemma}\label{lem:L^p-wL^1}
For every $q\in [1,2]$, 
there exists a positive constant $C_{{\rm H},q}>0$ 
such that 
if $f\in L_{d\mu}^q$, then
for every $t>0$, one has $e^{t\Delta_{B^c}}f\in L^2(B^c)$ with  
\[
\|e^{t\Delta_{B^c}}f\|_{L^2(B^c)}
\leq 
\frac{C_{{\rm H},q}}{t^{\frac{1}{q}-\frac{1}{2}}(1+\log (1+t)
)^{\frac{1}{q}}}\|f\|_{L_{d\mu}^q}. 
\]
\end{lemma}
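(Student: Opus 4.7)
My plan is to establish the bound by proving the endpoint cases $q=1$ and $q=2$ and then interpolating via Riesz--Thorin on the family $e^{t\Delta_{B^c}}:L^q_{d\mu}\to L^2$, since both of the claimed exponents $1/q-1/2$ and $1/q$ depend linearly on $1/q$. The short-time regime $0<t\le 1$ is handled uniformly: $(1+\log(1+t))^{1/q}$ is then bounded between positive constants, and the inclusion $L^q_{d\mu}\hookrightarrow L^q(B^c)$ (since $1+\log|x|\ge 1$ on $B^c$) reduces the claim to the classical $L^q$-$L^2$ ultracontractivity $\|e^{t\Delta_{B^c}}f\|_{L^2}\le Ct^{-(1/q-1/2)}\|f\|_{L^q}$, which in turn follows from pointwise domination of the Dirichlet heat kernel on $B^c$ by the Gaussian heat kernel on $\R^2$.

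For the long-time endpoint $q=1$, I would combine two bounds via Cauchy--Schwarz, $\|u\|_{L^2}^2\le\|u\|_{L^\infty}\|u\|_{L^1}$, where $u=e^{t\Delta_{B^c}}f$. The $L^\infty$ factor is controlled by a semigroup-composition refinement of \eqref{intro:eq:Linfty-logL1} (using the splitting $e^{t\Delta_{B^c}}=e^{(t/2)\Delta_{B^c}}\circ e^{(t/2)\Delta_{B^c}}$ to absorb the $\|f\|_{L^\infty}$ term on the right-hand side), yielding $\|u(t)\|_{L^\infty}\le Ch(t)\|f\|_{L^1_{d\mu}}$. The crucial new ingredient is an $L^1_{d\mu}$-to-$L^1$ improvement
\[
\|u(t)\|_{L^1(B^c)}\le \frac{C}{1+\log(1+t)}\|f\|_{L^1_{d\mu}},
\]
which quantifies the logarithmic rate at which Dirichlet heat mass is absorbed by $\pa B$ in two dimensions. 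I would prove this by testing the heat equation against truncations of the harmonic weight $\log|x|$ and tracking the dissipated flux, exploiting that $\log|x|$ vanishes on $\pa B$ and is harmonic on $B^c$.

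For the long-time endpoint $q=2$, setting $w(t)=\|u(t)\|_{L^2}^2$, the energy identity reads $w'(t)=-2\|\nabla u(t)\|_{L^2}^2$. The goal is a differential inequality $w'\le -cw/[(1+t)(1+\log(1+t))]$, which integrates to $w(t)\le C\|f\|_{L^2_{d\mu}}^2/(1+\log(1+t))$ as required. This would follow from a weighted Nash-type inequality of the shape $\|u\|_{L^2}^2\le C(1+t)(1+\log(1+t))\|\nabla u\|_{L^2}^2$ for heat-evolved profiles, which I would establish by combining the critical Hardy inequality (available in Section \ref{sec:preliminary}) with the conservation $\int u(t)\log|x|\,dx=\int f\log|x|\,dx$ and a bulk/Gaussian-tail decomposition at the effective scale $|x|\lesssim\sqrt{t}$. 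The main obstacle is precisely this $q=2$ step: since $(1+\log|x|)$ does not decay at infinity, a direct Poincar\'e-type argument is unavailable, and the logarithmic conservation law must be exploited carefully to extract the correct time-decay rate. The $L^1_{d\mu}$-to-$L^1$ improvement at $q=1$ is the other genuinely two-dimensional ingredient and has no analogue in the whole-space or higher-dimensional setting.
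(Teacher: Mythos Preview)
Your interpolation plan and the $q=1$ step are reasonable (both the $L^\infty$ and the $L^1$ improvements you need are in fact dual to the pointwise estimate underlying \eqref{intro:eq:Linfty-logL1}). The gap is the $q=2$ endpoint. The differential inequality $w'\le -cw/[(1+t)(1+\log(1+t))]$ cannot hold uniformly: if $f$ is a smooth bump concentrated near $|x|=R$ with $R$ large, then for $1\le t\ll R^2$ the boundary is irrelevant and $\|\nabla u(t)\|_{L^2}^2/\|u(t)\|_{L^2}^2$ can be made arbitrarily small. More fundamentally, integrating your ODE yields $w(t)\le w(0)(1+\log(1+t))^{-c}$ with $w(0)=\|f\|_{L^2}^2$, and nothing in the sketch produces the \emph{weighted} norm $\|f\|_{L^2_{d\mu}}^2$ on the right. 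The conserved quantity $\int u\log|x|\,dx$ you invoke is an $L^1_{d\mu}$ object, generally undefined for $L^2_{d\mu}$ data. And the tools you list (critical Hardy plus the $\sqrt t$-scale split) only give $\int_{|x|\le\sqrt t}u^2\,dx\le Ct(1+\log(1+t))^{2}\|\nabla u\|_{L^2}^2$ with \emph{two} logarithmic factors; even combined with the correct far-field bound $\int_{|x|>\sqrt t}u^2\,dx\le C(1+\log(1+t))^{-1}\|f\|_{L^2_{d\mu}}^2$, the resulting inequality $w'\le -cw/[t(\log t)^2]+\dots$ does not force the required decay because $\int^\infty dt/[t(\log t)^2]<\infty$.

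The paper sidesteps the $q=2$ endpoint entirely. In the Appendix it first proves, via the parabolic comparison principle with an explicit supersolution on the region $\{|x|<\sqrt t,\ t\ge 4\}$, the pointwise bound $|e^{t\Delta_{B^c}}g(x)|\le C\|g\|_{L^q}\,t^{-1/q}(1+\log(1+t))^{-1}(1+\log|x|)$; taking adjoints turns this into an $L^1_{d\mu}\to L^q$ estimate; then Riesz--Thorin is applied between this and the trivial $L^\infty\to L^\infty$ contraction (not between $q=1$ and $q=2$), with interpolation parameter $\theta=1-1/p$. Thus the case $q=2$ of Lemma~\ref{lem:L^p-wL^1} is obtained purely by interpolating the $q=1$ case against contractivity. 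If you drop your direct $q=2$ attack and interpolate your $q=1$ endpoint against $L^\infty$-contraction instead, your scheme becomes workable.
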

\subsection{Some functional inequalities}

In the present paper, the following form of the Gagliardo--Nirenberg inequalities 
will be used (cf. Friedman \cite{FriedmanBook}); 
note that the following inequality with $q=2$ is also called the Nash inequality.
\begin{lemma}\label{lem:GN}
For every $1<q<\infty$, 
one has $H_0^1(B^c)\cap L^1(B^c) \subset L^q(B^c)$ 
and there exists a positive constant $C_{{\rm GN},q}$ such that 
\[
\|f\|_{L^q(B^c)}
\leq 
C_{{\rm GN},q}
\|\nabla f\|_{L^2(B^c)}^{1-\frac{1}{q}}
\|f\|_{L^1(B^c)}^{\frac{1}{q}}, 
\quad \forall f\in H_0^1(B^c)\cap L^1(B^c).
\]
\end{lemma}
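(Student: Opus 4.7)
The plan is to reduce the inequality on the exterior domain $B^c$ to the classical Gagliardo--Nirenberg inequality on the whole plane $\R^2$ by means of the zero extension. Concretely, given $f\in H_0^1(B^c)\cap L^1(B^c)$, let $\widetilde{f}:\R^2\to\R$ denote its extension by $0$ on $B$. Since $f$ vanishes on $\pa B^c$ in the trace sense, a standard Sobolev-space argument (see, e.g., Friedman \cite{FriedmanBook}) guarantees $\widetilde{f}\in H^1(\R^2)\cap L^1(\R^2)$ with
\[
\|\widetilde{f}\|_{L^r(\R^2)}=\|f\|_{L^r(B^c)}\quad (r\in [1,\infty]),
\qquad
\|\nabla \widetilde{f}\|_{L^2(\R^2)}=\|\nabla f\|_{L^2(B^c)}.
\]

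Next I would invoke the classical Gagliardo--Nirenberg inequality on $\R^2$, which in the present case (scaling condition $\frac{1}{q}=(1-\theta)+\theta\cdot 0$ on $\R^2$, forcing $\theta=1-\frac{1}{q}$) reads
\[
\|\widetilde{f}\|_{L^q(\R^2)}\leq C_q\,\|\nabla \widetilde{f}\|_{L^2(\R^2)}^{1-\frac{1}{q}}\|\widetilde{f}\|_{L^1(\R^2)}^{\frac{1}{q}}.
\]
Substituting the identifications above yields exactly the desired estimate on $B^c$ with $C_{{\rm GN},q}=C_q$. If a self-contained derivation were preferred over a citation, one could proceed in two cases: for $q=2$ one uses the Nash inequality proved by Fourier splitting of $\widetilde{f}$ into low and high frequency parts and optimizing the splitting radius; for $1<q<2$ one interpolates $L^q$ between $L^1$ and $L^2$ and applies Nash; for $q>2$ one uses the two-dimensional Sobolev-type estimate $\|\widetilde f\|_{L^{2r}}\le C\|\nabla \widetilde f\|_{L^2}^{1/2}\|\widetilde f\|_{L^r}^{1/2}$ (iterated from Nash) together with interpolation against $L^1$.

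There is no substantive obstacle here; the only point requiring care is the verification that the trivial extension of an element of $H_0^1(B^c)$ belongs to $H^1(\R^2)$ (so that no boundary contributions appear in $\nabla \widetilde{f}$), but this is standard for the Dirichlet Sobolev space $H_0^1$ on a domain with smooth boundary. The rest of the argument is essentially a citation of a well-known inequality on $\R^2$, which is why the statement in the paper simply refers back to \cite{FriedmanBook}.
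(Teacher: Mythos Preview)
Your proposal is correct and matches the paper's treatment: the paper does not give a proof of Lemma~\ref{lem:GN} at all but simply cites it as a known inequality (cf.\ Friedman \cite{FriedmanBook}), and your zero-extension reduction to the classical Gagliardo--Nirenberg inequality on $\R^2$ is precisely the standard way to justify such a citation. The only care point you flag---that the trivial extension of $f\in H_0^1(B^c)$ lies in $H^1(\R^2)$---is indeed routine, so there is nothing to add.
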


To elicit the effect from the boundary (in two dimension), 
we also use the critical case of the Hardy inequality 
(cf. Ladyzhenskaya \cite{Ladyzhenskaya} and also Dan--Shibata \cite{DS1995}).
For the reader's convenience, we give a short proof.

\begin{lemma}\label{lem:critHardy}
If $f\in H_0^1(B^c)$, then
\[
\frac{1}{4}\int_{B^c}\frac{f^2}{|x|^2(1+\log|x|)^2}\,dx
\leq 
\int_{B^c}|\nabla f|^2\,dx.
\]
\end{lemma}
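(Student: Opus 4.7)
The plan is to prove the inequality via the ground-state factorization method, with the positive function $\psi(x)=\sqrt{1+\log|x|}$ playing the role of the virtual ground state. First I would verify, by direct computation using that $\Delta$ acts on radial functions as $r^{-1}\pa_r(r\pa_r\cdot)$ in two dimensions, that
\[
-\Delta \psi(x)=\frac{\psi(x)}{4|x|^2(1+\log|x|)^2}\quad \text{on } B^c;
\]
indeed, with $\psi'(r)=\tfrac{1}{2}r^{-1}(1+\log r)^{-1/2}$, one has $r\psi'(r)=\tfrac{1}{2}(1+\log r)^{-1/2}$, and differentiating once more produces exactly the stated right-hand side.

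Next I would prove the inequality for $f\in C_c^\infty(B^c)$ and then invoke density, since $H_0^1(B^c)$ is by definition the closure of $C_c^\infty(B^c)$ in the $H^1$-norm. For such $f$, since $\psi\geq 1$ on $B^c$, the function $v:=f/\psi$ is well-defined, compactly supported, and vanishes on $\pa B^c$. Expanding
\[
|\nabla f|^2=\psi^2|\nabla v|^2+2\psi v\,\nabla\psi\cdot\nabla v+v^2|\nabla\psi|^2
\]
and rewriting the cross term as $2\psi v\,\nabla\psi\cdot\nabla v=\nabla\psi\cdot\nabla(\psi v^2)-v^2|\nabla\psi|^2$, integration by parts gives
\[
\int_{B^c}|\nabla f|^2\,dx=\int_{B^c}\psi^2|\nabla v|^2\,dx-\int_{B^c}\psi v^2\Delta\psi\,dx+\text{bdry}.
\]
The boundary term on $\pa B^c$ vanishes because $v\equiv 0$ there, while the compact support of $v$ eliminates any contribution at infinity. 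Dropping the nonnegative term $\int \psi^2|\nabla v|^2\,dx$ and substituting $-\Delta\psi=\psi/(4|x|^2(1+\log|x|)^2)$ yields
\[
\int_{B^c}|\nabla f|^2\,dx\;\geq\;\int_{B^c}\frac{\psi^2 v^2}{4|x|^2(1+\log|x|)^2}\,dx
=\frac{1}{4}\int_{B^c}\frac{f^2}{|x|^2(1+\log|x|)^2}\,dx,
\]
as desired.

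The final step is to pass from $C_c^\infty(B^c)$ to $H_0^1(B^c)$: given $f\in H_0^1(B^c)$ and $f_n\in C_c^\infty(B^c)$ with $f_n\to f$ in $H^1$, the right-hand sides converge, while Fatou's lemma applied to the (pointwise a.e. convergent subsequence on the) left-hand side preserves the inequality. The only real obstacle is the algebraic verification that $\psi=\sqrt{1+\log|x|}$ is an exact solution of $-\Delta \psi=V\psi$ with the critical potential $V=1/(4|x|^2(1+\log|x|)^2)$; once this miracle is in hand, the factorization argument is standard and the vanishing of the boundary term on $\pa B^c$ follows automatically from $f\in H_0^1(B^c)$.
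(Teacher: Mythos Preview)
Your proof is correct and is essentially the same as the paper's: both use the substitution $v=g=H^{-1/2}f$ with $H(x)=1+\log|x|$, followed by integration by parts. The only cosmetic difference is that the paper works with $H$ and invokes its harmonicity $\Delta H=0$ (which explains why the weight is natural, and from which your identity $-\Delta(H^{1/2})=\tfrac{|\nabla H|^2}{4H^{3/2}}$ follows immediately), whereas you work directly with $\psi=H^{1/2}$ and verify the eigenvalue equation by hand.
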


\begin{proof}
By density, it suffices to discuss the estimate for $f\in C_0^\infty(B^c)$.
To simplify the notation, we set $H(x)=1+\log |x|$ which is positive and harmonic in $B^c$.
Then by using the transform $g=H^{-\frac{1}{2}}f \in C_0^\infty(B^c)$ 
and integration by parts, 
we can calculate as follows:
\begin{align*}
\int_{B^c}|\nabla f|^2\,dx
&=
\int_{B^c}\Big(|\nabla g|^2H+g\nabla g\cdot \nabla H+\frac{|\nabla H|^2}{4H}g^2\Big)\,dx
\\
&=
\int_{B^c}|\nabla g|^2H\,dx 
+
\int_{B^c}\left(-\frac{\Delta H}{2H}+\frac{|\nabla H|^2}{4H^2}\right)f^2\,dx.
\end{align*} 
This gives the desired inequality.
\end{proof}

\section{Linear decay estimates for $g\in L_{\rm rad}^2\cap L_{d\mu}^1$}\label{sec:linear}
\paragraph{}

In this section we study 
the decay property of the solution $u(t)=S(t)g$ 
of the linear damped wave equation
\begin{equation}\label{eq:sec3:dw-linear}
\begin{cases}
\pa_t^2u(x,t)-\Delta u(x,t)+\pa_tu(x,t)=0
&\text{in}\ B^c\times (0,T), 
\\
u(x,t)=0
&\text{on}\ \pa B^c\times (0,T), 
\\
(u,\pa_tu)(x,0)=(0,g(x)), &\text{in}\ B^c.
\end{cases}
\end{equation}
under the additional integrability condition $g\in L_{d\mu}^1$. 
Actually, 
under the condition $g\in L_{d\mu}^1$, 
we can find how a compact obstacle affects 
to the behavior of solutions
to the damped wave equation. 
The harvests are different from the usual damped wave equation in the whole space. 

\subsection{A Matsumura estimate with logarithmic weight}

Here we state the decay estimate for solutions to 
the damped wave equation \eqref{eq:sec3:dw-linear} 
for the initial condition 
$(u,\pa_tu)(0)=(0,g)$ with $g\in L^2(B^c)\cap L_{d\mu}^1$.
We emphasize that 
as the effect from the boundary, 
the following Matsumura type estimates have 
the decay rate involving the logarithmic function. 
This fact can be shown via the use of 
the $L_{d\mu}^q$-$L^2$ estimates for the Dirichlet heat semigroup $e^{t\Delta_{B^c}}$. 
\begin{lemma}\label{lem:linear:est-grad}
For every $q\in [1,2]$, 
there exist positive constants 
$C_{{\rm M},1,q}^\sharp $
and $C_{{\rm M},2,q}^\sharp $
 such that 
if $g\in L^2(B^c)\cap L_{d\mu}^q$, then
for every $t>0$, 
\begin{align*}
\|\nabla S(t)g\|_{L^2(B^c)}
&\leq C_{{\rm M},1,q}^\sharp h(t)^{\frac{1}{q}}
(\|g\|_{L_{d\mu}^q}+\|g\|_{L^2(B^c)}), 
\\
\|\pa_t S(t) g\|_{L^2(B^c)}
&\leq 
C_{{\rm M},2,q}^\sharp (1+t)^{-\frac{1}{2}}h(t)^{\frac{1}{q}} 
(\|g\|_{L_{d\mu}^q}+\|g\|_{L^2(B^c)}).
\end{align*}
\end{lemma}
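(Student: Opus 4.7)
The plan is to decompose $S(t)g = e^{t\Delta_{B^c}}g + \bigl(S(t)-e^{t\Delta_{B^c}}\bigr)g$ and treat the two pieces separately, handling the short-time regime by the basic energy identity and the long-time regime by the diffusion phenomenon combined with Lemma \ref{lem:L^p-wL^1} and Lemma \ref{lem:matsumura-applied}. For $t\in[0,1]$ the function $h(t)$ is bounded below by a positive constant, so the crude energy bound in Lemma \ref{lem:energy-est} together with the continuous embedding $L_{d\mu}^q\hookrightarrow L^2$ (when combined with the $L^2$ term) immediately yields the stated estimate on this interval. Therefore the core of the argument is the regime $t\geq 1$.

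For $t\geq 1$, first I would handle the parabolic part by the semigroup trick $e^{t\Delta_{B^c}} = e^{\frac{t}{2}\Delta_{B^c}}\,e^{\frac{t}{2}\Delta_{B^c}}$. Applying Lemma \ref{lem:L^p-wL^1} to the inner factor gives
\[
\|e^{\frac{t}{2}\Delta_{B^c}}g\|_{L^2(B^c)}\leq \frac{C}{t^{\frac{1}{q}-\frac{1}{2}}(1+\log(1+t))^{\frac{1}{q}}}\|g\|_{L_{d\mu}^q},
\]
and then the standard analytic-semigroup smoothing bounds
\[
\|\nabla e^{\frac{t}{2}\Delta_{B^c}}h\|_{L^2(B^c)}\leq Ct^{-\frac{1}{2}}\|h\|_{L^2(B^c)},\qquad \|\Delta_{B^c} e^{\frac{t}{2}\Delta_{B^c}}h\|_{L^2(B^c)}\leq Ct^{-1}\|h\|_{L^2(B^c)}
\]
applied with $h=e^{\frac{t}{2}\Delta_{B^c}}g$ give respectively
\[
\|\nabla e^{t\Delta_{B^c}}g\|_{L^2(B^c)}\leq Ct^{-\frac{1}{q}}(1+\log(1+t))^{-\frac{1}{q}}\|g\|_{L_{d\mu}^q}
\]
and the analogous estimate for $\partial_t e^{t\Delta_{B^c}}g=\Delta_{B^c} e^{t\Delta_{B^c}}g$ with an extra factor $t^{-\frac{1}{2}}$. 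These are exactly the desired rates $h(t)^{\frac{1}{q}}$ and $(1+t)^{-\frac{1}{2}}h(t)^{\frac{1}{q}}$ up to constants, for $t\geq 1$.

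Second, for the difference $(S(t)-e^{t\Delta_{B^c}})g$ I would directly invoke Lemma \ref{lem:matsumura-applied}, which yields the bounds $Ct^{-\frac{3}{2}}\|g\|_{L^2(B^c)}$ and $Ct^{-2}\|g\|_{L^2(B^c)}$ on the gradient and time-derivative, respectively. A simple comparison of exponents shows that for $q\in[1,2]$, hence $\frac{1}{q}\in[\frac{1}{2},1]$, one has $t^{-\frac{3}{2}}\lesssim h(t)^{\frac{1}{q}}$ and $t^{-2}\lesssim (1+t)^{-\frac{1}{2}}h(t)^{\frac{1}{q}}$ for $t\geq 1$, so these remainder contributions are absorbed into the stated right-hand sides and produce the $\|g\|_{L^2(B^c)}$ terms.

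The main conceptual obstacle is obtaining the logarithmic factor $(1+\log(1+t))^{-\frac{1}{q}}$ in the gradient bound, since the classical Matsumura estimate only sees the polynomial rate; the recovery of the log comes entirely from the Dirichlet heat semigroup estimate in Lemma \ref{lem:L^p-wL^1}, which in turn reflects the recurrence of two-dimensional Brownian motion outside $B$. The minor technical point is to verify that the analytic-semigroup smoothing $L^2\to H^1$ and $L^2\to D(\Delta_{B^c})$ is indeed available with the stated constants; this follows from spectral calculus applied to the nonnegative selfadjoint operator $-\Delta_{B^c}$ and is independent of the dimension or the geometry of $B^c$.
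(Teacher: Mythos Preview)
Your argument is essentially the same as the paper's: split $S(t)g$ into the heat part and the difference, handle $t\le 1$ by the energy identity in Lemma~\ref{lem:energy-est}, and for $t\ge 1$ write $e^{t\Delta_{B^c}}=e^{\frac{t}{2}\Delta_{B^c}}e^{\frac{t}{2}\Delta_{B^c}}$, apply Lemma~\ref{lem:L^p-wL^1} to the inner factor and analytic smoothing to the outer one, then absorb the remainder via Lemma~\ref{lem:matsumura-applied}. One small correction: there is no continuous embedding $L_{d\mu}^q\hookrightarrow L^2(B^c)$ for $q<2$ on the unbounded domain $B^c$, but you do not need it---the energy bound already gives $\|\nabla S(t)g\|_{L^2}+\|\partial_t S(t)g\|_{L^2}\le \|g\|_{L^2}$, and $\|g\|_{L^2}$ appears explicitly on the right-hand side.
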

\begin{proof}
The case $0<t\leq 1$ is obvious via Lemma \ref{lem:energy-est}.
Let $t\geq 1$ be arbitrary. 
using Lemma \ref{lem:matsumura-applied}, we see by the notation $s=t/2$ that 
\begin{align*}
\|\nabla S(t)g\|_{L^2(B^c)}
&\leq 
\|\nabla e^{t\Delta_{B^c}}g\|_{L^2(B^c)}
+
\|\nabla (S(t)-e^{t\Delta_{B^c}})g\|_{L^2(B^c)}
\\
&\leq 
\frac{1}{s^{\frac{1}{2}}}\|e^{s\Delta_{B^c}}g\|_{L^2(B^c)}
+
\frac{C_{{\rm M},1}'}
{t^{\frac{3}{2}}}\|g\|_{L^2(B^c)}
\\
&\leq \frac{C_{{\rm H},q}}{s^{\frac{1}{q}}(1+\log (1+s))^{\frac{1}{q}}}\|g\|_{L_{d\mu}^q}
+
\frac{C_{{\rm M},1}'}{t^{\frac{3}{2}}}\|g\|_{L^2(B^c)}
\end{align*}
and also 
\begin{align*}
\|\pa_t S(t)g\|_{L^2(B^c)}
&\leq 
\|\pa_t e^{t\Delta_{B^c}}g\|_{L^2(B^c)}
+
\|\pa_t (S(t)-e^{t\Delta_{B^c}})g\|_{L^2(B^c)}
\\
&\leq 
\frac{1}{s}\|e^{s\Delta_{B^c}}g\|_{L^2(B^c)}
+
\frac{C_{{\rm M},2}'}{t^{2}}
\|g\|_{L^2(B^c)}
\\
&\leq \frac{C_{{\rm H},q}}{s^{\frac{1}{2}+\frac{1}{q}}(1+\log (1+s))^{\frac{1}{q}}}\|g\|_{L_{d\mu}^q}
+
\frac{C_{{\rm M},2}'}{t^{2}}\|g\|_{L^2(B^c)}.
\end{align*}
These imply the desired estimates.
\end{proof}

\subsection{A positively preserving 
property}

In this subsection, we study 
a positively preserving 
property of the solution map $S(t)$ for the linear damped wave equation \eqref{eq:sec3:dw-linear} under the radial symmetry.
\begin{proposition}\label{prop:mp}
If $g\in L_{\rm rad}^2$ is nonnegative, 
then the solution $S(t)g$ of \eqref{eq:sec3:dw-linear} 
is also radially symmetric and nonnegative for $t>0$.
\end{proposition}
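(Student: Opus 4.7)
The plan is to establish radial symmetry via uniqueness, reduce to a $1$D half-line problem via the weighted substitution $v = r^{1/2} u$, and deduce nonnegativity using an explicit $I_0$-kernel together with a Picard/Duhamel perturbation argument.

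For radial symmetry: if $u$ solves \eqref{eq:sec3:dw-linear}, then for any $R \in O(2)$ the function $u_R(x,t) := u(Rx, t)$ solves the same problem with initial data $(0, g \circ R) = (0, g)$, since $\Delta_{B^c}$, $\pa B^c$, and $g$ are all rotation invariant. By uniqueness of weak solutions (Proposition \ref{prop:ndw:fundamental}), $u_R = u$, so $u$ is radial.

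For positivity, write $u(x, t) = w(|x|, t)$ and set $v(r, t) := r^{1/2} w(r, t)$. A direct calculation gives
\[
v_{tt} - v_{rr} + v_t = \frac{1}{4 r^2}\, v, \qquad r > 1,\ t > 0,
\]
with $v(1, t) = 0$ and $(v, v_t)(r, 0) = (0, G)$, where $G(r) := r^{1/2} g(r) \geq 0$; the coefficient $1/(4r^2)$ is positive and bounded by $1/4$ on $(1, \infty)$. Let $S_0(t)$ denote the solution operator (for initial data $(0, G)$) of the \emph{unperturbed} $1$D damped wave equation $v_{0,tt} - v_{0,rr} + v_{0,t} = 0$ on $(1, \infty)$ with Dirichlet condition at $r = 1$. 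Odd reflection across $r = 1$ applied to the classical full-line formula yields
\begin{align*}
(S_0(t) G)(r) = \frac{1}{2} e^{-t/2} \int_1^\infty \Big[ &I_0\Big(\tfrac{1}{2}\sqrt{t^2 - (r-y)^2}\Big)\, \mathbf{1}_{\{|r-y|\leq t\}} \\
&\quad - I_0\Big(\tfrac{1}{2}\sqrt{t^2 - (r+y-2)^2}\Big)\, \mathbf{1}_{\{|r+y-2|\leq t\}}\Big]\, G(y)\, dy.
\end{align*}
For $r, y \geq 1$ one checks $(r+y-2)^2 - (r-y)^2 = 4(r-1)(y-1) \geq 0$, and since $I_0$ is positive and increasing on $[0, \infty)$, the bracketed kernel is nonneg on its support. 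Hence $S_0(t)$ is positivity preserving.

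Duhamel's formula gives the Volterra equation
\[
v(r, t) = (S_0(t) G)(r) + \int_0^t \Big(S_0(t - s)\Big[\frac{1}{4(\cdot)^2}\, v(\cdot, s)\Big]\Big)(r)\, ds.
\]
Picard iterates $v^{(0)} := S_0(t) G$ and $v^{(n+1)} := S_0(t) G + \int_0^t S_0(t-s)[\tfrac{1}{4(\cdot)^2} v^{(n)}(\cdot, s)]\,ds$ are nonneg by induction (using positivity of $S_0$ and of $1/(4r^2)$). A standard contraction argument in $C([0, T_0]; L^2(1, \infty))$, using $\|1/(4r^2)\|_{L^\infty(1,\infty)} = 1/4$ and the basic energy estimate for $S_0$, shows the iterates converge in this norm to the unique solution $v$ on $[0, T_0]$ for some $T_0 > 0$. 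Passing to an a.e.\ subsequential limit preserves nonnegativity, and iterating in time yields $v \geq 0$ for all $t > 0$; hence $u(x, t) = |x|^{-1/2} v(|x|, t) \geq 0$.

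The main obstacle is the absence of a classical maximum principle for damped wave equations; the strategy above circumvents it by exploiting radiality to reduce to $1$D, where the explicit $I_0$ kernel and the geometric inequality $(r-y)^2 \leq (r+y-2)^2$ for $r, y \geq 1$ deliver positivity of the base semigroup $S_0$. Once this and the correct substitution $v=r^{1/2}w$ (which isolates a bounded, sign-definite potential) are in place, the perturbation step is routine.
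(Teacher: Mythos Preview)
Your approach is correct and takes a genuinely different route from the paper. Both proofs share the radial reduction, the substitution $v=r^{1/2}u$, and the odd reflection across $r=1$; from there they diverge. The paper first removes the damping via $U=e^{t/2}v$, obtaining $U_{tt}-U_{rr}=m(r)U$ with $m=\tfrac14(r^{-2}+1)$, and then runs a \emph{contradiction argument}: it assumes a negative value, perturbs by $\varepsilon e^{t}$, locates the first zero of the perturbed function inside a characteristic triangle, and applies d'Alembert's formula there to reach a contradiction (using only $0\le m\le\tfrac12$ and the odd symmetry). You instead keep the damping, invoke the \emph{explicit} modified-Bessel kernel for the free one-dimensional telegraph equation on the half-line, verify its nonnegativity via the monotonicity of $I_0$ together with the elementary inequality $(r-y)^2\le(r+y-2)^2$ for $r,y\ge1$, and then treat $\tfrac{1}{4r^2}v$ as a nonnegative forcing via Picard iteration. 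Your argument is constructive and yields a series representation of the solution; the paper's argument is more self-contained (no special functions) and absorbs the entire potential $m$ in a single step.

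One imprecision deserves attention: the phrase ``iterating in time'' does not work as written. Restarting the Duhamel formula at $T_0$ requires positivity of the evolution with nonzero initial position, and you have no control on the sign of $v_t(T_0)$. The clean fix is to drop the local contraction altogether and use the Volterra structure: with $\|S_0(\tau)f\|_{L^2}\le\tau\|f\|_{L^2}$ and $\|a\|_{L^\infty}=\tfrac14$, one gets by induction $\|K^n v(t)\|_{L^2}\le \dfrac{t^{2n}}{4^n(2n)!}\sup_{[0,T]}\|v\|_{L^2}$ for the Duhamel operator $K$, so the Picard iterates converge in $C([0,T];L^2)$ for \emph{every} $T>0$, and nonnegativity passes to the limit globally with no restart needed.
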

\begin{proof}
It is enough to show the assertion for the functions 
belonging to $D(\Delta_{B^c})=H^2(B^c)\cap H_0^1(B^c)$.  
Indeed, since the resolvent operator $(1-\frac{1}{n}\Delta_{B^c})^{-1}$
preserves the nonnegativity and the radial symmetry 
(for nonnegativity see e.g, Brezis \cite[Section 9.7]{BrezisBook}, 
the radial symmetry is the consequence of rotation), 
if $g\in L_{\rm rad}^2$, then 
for each $n\in\N$, 
$g_n=(1-\frac{1}{n}\Delta_\Omega)^{-1}g\in D(\Delta_{B^c})$ 
is also nonnegative and radially symmetric. 
If the assertion for $D(\Delta_{B^c})$ holds, then 
we have the nonnegativity of $S(t)g_n$ for $n\in \N$. 
Noting that $g_n\to g$ in $L^2(B^c)$ as $n\to\infty$ 
and 
recalling $S(t)g_n=P_1e^{t\mathcal{L}_{B^c}}(0,g_n)$ with 
the projection $P_1(u,v)=u$, 
we see by the continuity of $e^{t\mathcal{L}_{B^c}}$ in $H_0^1(B^c)\times L^2(B^c)$ 
that $S(t)g_n\to S(t)g$  in $H_0^1(B^c)$ as $n\to\infty$  
which implies the nonnegativity and the radial symmetry of $S(t)g$. 

Now we assume $g \in D(\Delta_{B^c})=H^2(B^c)\cap H^1_0(B^c)$.
Then, the corresponding solution
$u=S(t)g$ has the regularity
\begin{gather*}
	u \in C([0,\infty) ; H^3(B^c)) \cap C^1([0,\infty) ; H^2(B^c)) \cap C^2([0,\infty) ; H^1(B^c))
\end{gather*}
with the boundary condition $u, \Delta u\in C([0,\infty) ; H_0^1(B^c))$
and satisfies the equation of \eqref{eq:dw-linear} in $H_0^1(B^c)$.
Since $g$ is radially symmetric, 
so is $u(\cdot,t)=S(t)g$ (as the consequence of rotation) 
and hence, noting 
$\Delta u=\pa_r^2u+\frac{1}{r}\pa_r u$
we can find that 
the new function 
$U(r,t) = e^{t/2} r^{1/2}u(x,t)$ with $r = |x|$ 
satisfies the equation
\begin{align*}
	\pa_t^2 U - \pa_r^2 U = \frac{1}{4} \left( \frac{1}{r^2} + 1 \right) U\quad\text{in}\ (1,\infty) \times (0,\infty)
\end{align*}
with  the initial condition $U(r,0)=0$ and 
$\pa_tU(r,0)=g_1(r)=r^{1/2}g(r)$.  
Then, the above regularity of $u$ shows 
$U \in C^2([1,\infty) \times [0,\infty))$
with $U(0,t)=\pa_r^2U(0,t)=0$,
which justifies the following argument 
in the classical sense.

By the reflection,
we extend $g_1(\cdot)$ and $U(\cdot,t)$ to be odd functions.
We set $\widetilde{g}_1$ and $\widetilde{U}$
as
\begin{align*}
	\widetilde{g}_1(y) = 
	\begin{cases}
	g_1(1+y) &(y > 0),\\ 0 &(y=0), \\ -g_1(1-y) &(y<0),
	\end{cases}
	\quad
	\widetilde{U}(y,t) = 
	\begin{cases}
	U(1+y,t) &(y > 0,t>0),\\ 0 &(y=0,t>0), \\ -U(1-y,t) &(y<0,t>0), 
	\end{cases}
\end{align*}
which satisfy the initial value problem 
\begin{align*}
\begin{cases}
\pa_t^2 \widetilde{U} - \pa_y^2 \widetilde{U} = m 
\widetilde{U}
& \text{in}\ \mathbb{R} \times (0,\infty),
\\
(\widetilde{U},\pa_t \widetilde{U})(y,0) = (0, \widetilde{g}_1(y))
& \text{in}\ \mathbb{R} \times \{t=0\},
\end{cases}
\end{align*}
where $m(y) = \frac{1}{4} \big( \frac{1}{(|y|+1)^2} + 1 \big)$. 
Note that $\widetilde{g}_1\in C^1(\R)$ and $\widetilde{U}\in C^2(\R\times (0,\infty))$
by virtue of the behavior of the boundary $g_1(1)=0$ and $U(1,t)=\pa_r^2U(1,t)=0$.

We prove the nonnegativity of $u$ by a contradiction argument similar to \cite[Chapter 4]{PrWe}.
We assume that there exists a point $(x_0,t_0)\in B^c\times (0,\infty)$ such that 
$u(x_0,t_0)<0$. Then clearly, setting $y_0=|x_0|-1>0$, we have 
\[
\widetilde U(y_0,t_0)=U(|x_0|,t_0)=e^{t_0/2}|x_0|^{\frac{1}{2}}u(x_0,t_0)<0. 
\]
Now we fix the parameter $\ep>0$ satisfying $\widetilde U(y_0,t_0)+\ep e^{t_0}=0$
and put
\begin{align*}
	V(y,t) = \widetilde{U}(y,t) + \varepsilon e^{t}, 
	\quad (y,t)\in \R\times (0,\infty).
\end{align*}
Then, the new function $V$ satisfies
\begin{align*}
\begin{cases}
\pa_t^2 V - \pa_y^2 V = m V + \ep ( 1-m) e^{t}
& \text{in}\ \mathbb{R} \times (0,\infty),
\\
(V,\pa_tV)(y,0) = (\ep , \widetilde{g}_1(y)+\ep)
& \text{in}\ \mathbb{R} \times \{t=0\}. 
\end{cases}
\end{align*}
Then we consider the triangular region 
\[
D_0= \{ (y,t) \in \mathbb{R} \times (0,\infty) ; t+|y-y_0| < t_0\}.
\]
By $V(y,0) = \varepsilon > 0$ and the continuity,
$V$ is positive in $\overline{D_0}$ with sufficiently small $t$.
Thus, by considering a zero of $V$ in $\overline{D_0 \cap \{ y > 0 \}}$ with the smallest time
(the zero set of $V$ in $\overline{D_0 \cap \{ y > 0 \}}$ is not empty due to 
$V(y_0, t_0)=0$),
there exists a point
$(y_1,t_1) \in \overline{D_0 \cap \{ y > 0 \}}$
such that
\begin{align*}
	V(y_1,t_1) = 0 \quad \mbox{and} \quad V > 0 \ \mbox{in} \ D_1 \cap \{ y > 0\},
\end{align*}
where
$D_1$ is the triangular region
$D_1 = \{ (y,t) \in \mathbb{R} \times (0,\infty) ; t+|y-y_1| < t_1\} (\subset D_0)$.
We further define the subregion
$D_1' = \{ (y,t) \in D_1 ;t+|y| < t_1- y_1\}$.
Note that
$D_1' = \emptyset$ if $t_1 \le y_1$
and
$D_1\setminus D_1'$
is the trapezoidal region with the vertices $(y_1,t_1), (0,t_1-y_1), (t_1-y_1,0), (y_1+t_1,0)$
if $t _1 > y_1$.
Applying the d'Alembert formula 
with the above notation, we have
\begin{align*}
	0=V(y_1,t_1)&=
	\frac{1}{2} \left( V(y_1-t_1,0) + V(y_1+t_1,0) \right)
	+ \frac{1}{2} \int_{y_1-t_1}^{y_1+t_1} \partial_t V(y,0) \,dy \\
	&\quad
	+ \frac{1}{2}\iint_{D_1} m(y) V(y,t) \,dydt 
	+ \frac{\varepsilon}{2}\iint_{D_1} ( 1- m(y) )  e^{t} \,dydt
\\
	&=
	(1+t_1)\ep + \frac{1}{2} \int_{|y_1-t_1|}^{y_1+t_1} \widetilde{g}_1(y) \,dy
	+ \frac{\ep}{2}\iint_{D_1'} m(y)e^{t} \,dydt
\\
&\quad
	+ \frac{1}{2}\iint_{D_1\setminus D_1'} m(y) V(y,t) \,dydt
	+ \frac{\varepsilon}{2}\iint_{D_1} ( 1- m(y) )  e^{t} \,dydt, 
\end{align*}
where we have used that
$\widetilde{g}$ and $\widetilde{U}(\cdot,t)$ are odd functions 
and $m$ is an even function.
Then using the conditions 
\begin{align*}
\widetilde{g}_1\geq 0\ \text{on}\ (0,\infty), 
\quad V>0\ \text{on}\ D_1 \cap \{ y > 0 \}, 
\quad 0\leq m\leq \frac{1}{2}\ \text{on}\ \R
\end{align*}
and the fact $D_1\setminus D_1' \subset D_1 \cap \{ y > 0 \}$,
we find that the right-hand side of the above identity is positive, 
which is contradiction.
The proof is complete.
\end{proof}

\subsection{An $L_{d\mu}^1$-estimate for $g\in L_{\rm  rad}^2\cap L_{d\mu}^1$}

By virtue of the positively preserving property of $S(t)$ 
on the radially symmetric functions, 
we can discuss the validity of $L_{d\mu}^1$-estimates for $S(t)g$. 
Basically, the following lemma 
is similar to the analysis of the ordinary differential 
equation $y''+y'=0$.  

\begin{lemma}\label{lem:linear:wL1est}
For every $g\in L_{\rm rad}^2\cap L_{d\mu}^1$, 
one has $S(t)g\in C([0,\infty);L_{d\mu}^1)
$  
with
\[
\|S(t)g\|_{L^1_{d\mu}}\leq (1-e^{-t})\|g\|_{L^1_{d\mu}}, \quad t>0.
\]
In particular, $S(t)$ can be extended to the bounded operator from $L^2_{\rm rad}\cap L_{d\mu}^1$ to itself. 
\end{lemma}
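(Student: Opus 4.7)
The plan is to reduce the proof to an ODE comparison for the weighted integral
\[
J(t) = \int_{B^c} u(x,t)\,(1+\log|x|)\,dx,\qquad u(t)=S(t)g,
\]
exploiting two crucial properties of the weight $h(x):=1+\log|x|$: it is harmonic on $B^c$ and equals $1$ on $\partial B^c$. This structure is what makes the analysis follow the pattern of the scalar ODE $y''+y'=0$ alluded to in the statement.

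First I would make two reductions. By linearity and the positivity-preservation in Proposition \ref{prop:mp}, decomposing $g=g_+-g_-$ (still radial and in $L^2_{\rm rad}\cap L^1_{d\mu}$) gives $|S(t)g|\leq S(t)g_++S(t)g_-$, so it suffices to prove the bound for nonnegative radial $g$. A further approximation by nonnegative, compactly supported, radial mollifications $g_n\in C_c^\infty(B^c)\cap L^2_{\rm rad}$ converging to $g$ in $L^2\cap L^1_{d\mu}$ allows me to assume $g$ is smooth and compactly supported; finite speed of propagation for the linear damped wave equation then keeps $u(t)$ spatially compactly supported at every $t$, making the manipulations below rigorous.

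For such smooth data, differentiating under the integral and using the equation yields
\[
J''(t)+J'(t) = \int_{B^c}\Delta u\,(1+\log|x|)\,dx.
\]
Green's identity collapses the right-hand side: the interior term vanishes because $\Delta(1+\log|x|)=0$ in $B^c$; the boundary term involving $\partial_\nu h$ vanishes because $u|_{\partial B^c}=0$; and the remaining $\int_{\partial B^c} h\,\partial_\nu u\,dS$ reduces to $\int_{\partial B^c}\partial_\nu u\,dS$ because $h|_{\partial B^c}=1$. Here Proposition \ref{prop:mp} is decisive: $u\geq 0$ in $B^c$ with $u|_{\partial B^c}=0$ forces the outward normal derivative $\partial_\nu u$ (pointing into $B$) to be nonpositive, so $J''+J'\leq 0$. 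Combined with $J(0)=0$ and $J'(0)=\|g\|_{L^1_{d\mu}}$, the multiplier $e^t$ gives $(e^tJ')'\leq 0$, hence $J'(t)\leq e^{-t}\|g\|_{L^1_{d\mu}}$, and a second integration gives $J(t)\leq (1-e^{-t})\|g\|_{L^1_{d\mu}}$. Nonnegativity of $u$ identifies $J(t)=\|u(t)\|_{L^1_{d\mu}}$, settling the smooth case. For general nonnegative radial $g$, Fatou's lemma applied to a subsequence of $S(t)g_n$ converging a.e.\ on $B^c$ (extracted from $L^2_{\rm loc}$-convergence, itself coming from the energy estimate and Poincar\'e on annuli) passes the bound to the limit, and the signed decomposition then gives the full estimate.

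For the continuity statement, applying the proved inequality to differences yields $\|S(t)g-S(t)g_n\|_{L^1_{d\mu}}\leq \|g-g_n\|_{L^1_{d\mu}}$ uniformly in $t\geq 0$, and $t\mapsto S(t)g_n$ is continuous in $L^1_{d\mu}$ for each smooth compactly supported $g_n$ (by spatial compact support and $C^1$-in-$t$ regularity), so the uniform limit inherits continuity. The main obstacle is the boundary contribution: one must justify the integration by parts on the unbounded $B^c$ (neutralised by the compact-support reduction via finite propagation speed) and correctly pin down the sign of $\partial_\nu u$ on $\partial B^c$. The latter is the heart of the matter and relies entirely on Proposition \ref{prop:mp}, which is precisely where the radial-symmetry hypothesis enters the whole argument.
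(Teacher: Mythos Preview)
Your argument is correct and reaches the same differential inequality $J''+J'\leq 0$ as the paper, but the way you get there in Step~1 is genuinely different. You test directly against $h(x)=1+\log|x|$, carry out Green's identity, and handle the resulting boundary integral $\int_{\partial B^c}\partial_\nu u\,dS$ by arguing its sign from $u\geq 0$ in $B^c$ and $u=0$ on $\partial B^c$ (a Hopf-type observation). The paper instead approximates the weight by $\varphi_n(x)=1+\log|x|-|x|^{-n}$, which vanishes on $\partial B^c$, so that the pairing $\langle \Delta u,\zeta\varphi_n\rangle_{H^{-1},H_0^1}=\int_{B^c} u\,\Delta\varphi_n\,dx$ sees no boundary term at all; the sign then comes from $\Delta\varphi_n=-n^2|x|^{-n-2}\leq 0$, and one lets $n\to\infty$. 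Your route is more transparent about the geometric mechanism (it is precisely the Dirichlet flux through $\partial B^c$ that is being discarded), but it costs you a regularity upgrade to make $\partial_\nu u$ on $\partial B^c$ and its sign meaningful --- hence your mollification to $g\in C_c^\infty$. The paper's route works at the $H^{-1}$--$H_0^1$ level throughout and never needs to look at the boundary derivative, which is why it can start from $g\in L^2$ and just use the resolvent approximation. For Steps~2 and~3 (passing from compactly supported to general nonnegative $g$, then to signed $g$) the paper proceeds almost identically to you; the only minor difference is that the paper uses cut-offs $g_n=g\chi_{B(0,n)}$ so that $g_m-g_n\geq 0$ for $m>n$, which lets it show directly that $(u_n)$ is Cauchy in $C(I;L^1_{d\mu})$ rather than invoking Fatou.
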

\begin{proof}
To shorten the notation, we use $u=S(t)g$. 
We divide the proof into three steps as follows:
\begin{description}
\item[Step 1.] the case where $0\leq g\in L_{\rm rad}^2$ having bounded support, 
\item[Step 2.] the case where $0\leq g\in L_{\rm rad}^2\cap L_{d\mu}^1$ without assumption on the support,
\item[Step 3.] the case where $g\in L_{\rm rad}^2\cap L_{d\mu}^1$ admitting the change of sign. 
\end{description}

\textbf{(Step 1)}
Let $R>1$ satisfy 
${\rm supp}\,g\subset B(0,R)=\{x\in\R^2\;;\;|x|< R\}$.
Then by finite propagation property, we have 
${\rm supp}\,u(t)\subset B(0,R+t)$. 
Here we fix $\zeta\in C^\infty(\R^2\times [0,\infty))$ as 
$\zeta(x,t)=\zeta_0((R+t)^2-|x|^2)$ 
with $\zeta_0\in C^\infty(\R)$ satisfying 
$\zeta_0\equiv 1$ on $[0,\infty)$ 
and $\zeta_0\equiv 0$ on $(-\infty,-1)$. 
Here we note that $\zeta(\cdot,t)\in C_0^\infty(\R^N)$ 
and $\zeta\equiv 1$ on ${\rm supp}\,u$. 
By the nonnegativity of $u(t)$  (provided by Proposition \ref{prop:mp}), 
we can see from 
$u\in C([0,\infty);L^2(B^c))$ 
and 
$\zeta(x,t)(1+\log |x|)\in C([0,\infty);L^2(B^c))$ that 
$u(x,t)=\zeta(x,t)u(x,t)\in C([0,\infty);L_{d\mu}^1)$. 
Moreover, Setting 
$\varphi_n(x)=1+\log |x|-|x|^{-n}$ (for $n\in \N$), 
we have $\varphi_n(x)\to 1+\log |x|$ as $n\to \infty$ 
and
$\zeta(x,t)\varphi_n(x)\in C^2([0,\infty);H_0^1(B^c))$
which is applicable to 
the test function for the equation in \eqref{eq:dw-linear} (verified in $H^{-1}(B^c)$). 
Hereafter, all integrals on $B^c$ always can be justified 
by using the relation $\zeta(x,t)u(x,t)=u(x,t)$.
Noting that 
\[
\Delta \varphi_n
=-\Delta |x|^{-n}
=n\,{\rm div}(x|x|^{-n-2})=-n^2|x|^{-n-2}\leq 0,
\]
we see from the nonnegativity of $u(t)$ that 
\begin{align*}
\lr{\Delta u(t),\zeta(t)\varphi_n}_{H^{-1},H_0^1}
=
\int_{B^c} 
u(t)\Delta\varphi_n\,dx\leq 0.
\end{align*}
Therefore the equation in \eqref{eq:dw-linear} 
gives 
\begin{align*}
\frac{d^2}{dt^2}
\int_{B^c} u(t)\varphi_n\,dx
+
\frac{d}{dt}
\int_{B^c} u(t)\varphi_n\,dx
=
\lr{\Delta u(t),\zeta(t)\varphi_n}_{H^{-1},H_0^1}
\leq 0.
\end{align*}
which implies 
\[
\int_{B^c} u(t)\varphi_n\,dx
\leq (1-e^{-t})\int_{B^c} g\varphi_n\,dx.
\]
Letting $n\to \infty$, we obtain the desired inequality. 

\textbf{(Step 2)}
We use 
the cut-off approximation 
$g_n=\chi_{B^c\cap B(0,n)}(x)g(x)$, 
where $\chi_K$ is the indicator function of $K$. 
Put $u_n=S(t)g_n$. Then 
$g_n\to g$ in $L^2(B^c)$ as $n\to \infty$
and hence $u_n(t) \to u(t)$ in $H_0^1(B^c)$ as $n\to \infty$.
For each $n\in \N$, we can apply the claim
in Step 1 to $g_n$ 
and also $g_m-g_n$ ($m>n$)
and then 
$u_n\in C([0,\infty);L_{d\mu}^1)$ and 
\begin{gather*}
\|u_n(t)\|_{L_{d\mu}^1}
\leq 
(1-e^{-t})
\|g_n\|_{L_{d\mu}^1}
\leq 
(1-e^{-t})
\|g\|_{L_{d\mu}^1},
\\
\|u_m(t)-u_n(t)\|_{L_{d\mu}^1}
\leq 
(1-e^{-t})
\|g_m-g_n\|_{L_{d\mu}^1}.
\end{gather*}
Therefore $u_n$ is the Cauchy sequence in the Banach space 
$C(I;L_{d\mu}^1)$ endowed with the sup norm 
for any compact interval $I\subset [0,\infty)$. 
Since $u_n(t)$ converses to $u(t)$ in the pointwise sense, 
we can obtain $u \in C([0,\infty);L_{d\mu}^1)$ and the desired inequality. 

\textbf{(Step 3)}
In this case we use 
the decomposition
\[
g=g_+-g_-, \quad g_\pm:=\max\{\pm g,0\}\geq 0.
\]
Then by the consequence of Step 2, we have
$S(t)g=S(t)g_+-S(t)g_-\in C([0,\infty);L_{d\mu}^1)$
and
\begin{align*}
\|S(t)g\|_{L_{d\mu}^1}
&\leq 
\|S(t)g_+\|_{L_{d\mu}^1}
+\|S(t)g_-
\|_{L_{d\mu}^1}
\\
&\leq 
(1-e^{-t})\Big(\|g_+\|_{L_{d\mu}^1}
+\|g_-
\|_{L_{d\mu}^1}\Big)
\\
&=
(1-e^{-t})\|g\|_{L_{d\mu}^1}.
\end{align*}
The proof is complete.
\end{proof}

\section{Estimates for semilinear problem}\label{sec:semilinear}

In this section, we discuss the estimate from below
for the lifespan of solution 
\[
u(t)=\ep S(t)g+\int_{0}^tS(t-s)[|u(s)|^p]\,ds, \quad t\in (0,T)
\]
to \eqref{intro:problem}. The argument depends on the continuity method 
based on the blowup alternative (Proposition \ref{prop:ndw:fundamental} {\bf (ii)}).
The quantity 
\begin{equation}\label{eq:X_Tnorm}
\|v\|_{X_T}=\sup_{0\leq t< T}
\left(
\|v(t)\|_{L^1_{d\mu}}
+
\frac{\|\nabla v(t)\|_{L^2(B^c)}}{h(t)}\right), 
\quad 
v\in C([0,T);H_0^1(B^c)\cap L_{d\mu}^1).
\end{equation}
plays a crucial role. We first note that the above quantity 
for the linear solution $S(t)g$ with $g\in L_{\rm rad}^2\cap L_{d\mu}^1$ 
is finite. 
\begin{lemma}\label{lem:X_T:linear}
There exists a positive constant $C_1$ such that 
for every $g\in L_{\rm rad}^2\cap L_{d\mu}^1$, 
\[
\|S(t)g\|_{X_\infty}
\leq 
C_1\Big(\|g\|_{L_{d\mu}^1}+\|g\|_{L^2(B^c)}\Big).
\]
\end{lemma}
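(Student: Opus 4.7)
The proof is a direct combination of two ingredients already available in the paper, so the plan is essentially bookkeeping rather than new analysis.

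First I would recall that the $X_\infty$-norm splits cleanly into two supremums: the weighted $L^1$ norm $\|S(t)g\|_{L^1_{d\mu}}$ and the rescaled gradient norm $\|\nabla S(t)g\|_{L^2(B^c)}/h(t)$. These two pieces are controlled by two independent results, and the radial symmetry of $g$ is used only for the first one.

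For the weighted $L^1$ piece, I would apply Lemma \ref{lem:linear:wL1est} directly. Since $g\in L^2_{\rm rad}\cap L^1_{d\mu}$, that lemma yields $S(t)g\in C([0,\infty);L^1_{d\mu})$ together with
\[
\|S(t)g\|_{L^1_{d\mu}}\leq (1-e^{-t})\|g\|_{L^1_{d\mu}}\leq \|g\|_{L^1_{d\mu}}, \quad t\geq 0,
\]
which already gives a uniform-in-$t$ bound of the desired form.

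For the gradient piece, I would invoke Lemma \ref{lem:linear:est-grad} with $q=1$. This gives, for every $t>0$,
\[
\|\nabla S(t)g\|_{L^2(B^c)}\leq C^\sharp_{{\rm M},1,1}\,h(t)\,\bigl(\|g\|_{L^1_{d\mu}}+\|g\|_{L^2(B^c)}\bigr),
\]
so after dividing through by $h(t)$ the quantity $\|\nabla S(t)g\|_{L^2(B^c)}/h(t)$ is bounded uniformly in $t$ by $C^\sharp_{{\rm M},1,1}(\|g\|_{L^1_{d\mu}}+\|g\|_{L^2(B^c)})$.

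Adding the two bounds and taking the supremum over $t\in[0,\infty)$ in the definition \eqref{eq:X_Tnorm} yields the lemma with $C_1=1+C^\sharp_{{\rm M},1,1}$. I do not foresee any real obstacle: the radial hypothesis on $g$ is needed only to legitimize the use of Lemma \ref{lem:linear:wL1est}, which in turn relies on the positivity preservation of Proposition \ref{prop:mp}; the gradient estimate of Lemma \ref{lem:linear:est-grad} does not require radiality. One small point worth mentioning in the write-up is that the $L^2$ component of the upper bound is really only active on the short-time interval $t\in[0,1]$, where it is dominated by the energy inequality in Lemma \ref{lem:energy-est}; for $t\geq 1$ the weighted $L^1$ norm alone suffices through the heat semigroup estimate (Lemma \ref{lem:L^p-wL^1}) and the Matsumura correction (Lemma \ref{lem:matsumura-applied}). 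But these details are entirely absorbed inside Lemma \ref{lem:linear:est-grad}, so the argument here reduces to citing the two lemmas and collecting constants.
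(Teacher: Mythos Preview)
Your proposal is correct and matches the paper's own proof exactly: the paper also cites Lemmas \ref{lem:linear:wL1est} and \ref{lem:linear:est-grad} (the latter with $q=1$) and records the constant $C_1=1+C_{{\rm M},1,1}^\sharp$.
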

\begin{proof}
We can choose $C_1=1+C_{{\rm M},1,1}^\sharp$
which is a consequence of Lemmas \ref{lem:linear:wL1est} and \ref{lem:linear:est-grad}.
\end{proof}

Before treating the lifespan of $u$, we state a modified assertion 
about the blowup alternative from the viewpoint of the quantity $\|u\|_{X_T}$.

\begin{lemma}\label{lem:alternative-X_T}
Let $u$ be the weak solution of \eqref{intro:problem} 
in $(0,T)$ with $g$ having a compact support. 
Then $T=T_{\max}(0,\ep g)$ 
if and only if 
$T=+\infty$ 
or 
$T<+\infty$ with $\lim_{t\to T}\|u\|_{X_t}=+\infty$. 
\end{lemma}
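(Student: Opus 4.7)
The plan is to bridge the quantity $\|u\|_{X_T}$ and the natural energy functional in Proposition~\ref{prop:ndw:fundamental}(ii), reducing the statement to the standard blowup alternative. The ``if'' direction is handled using finite propagation speed for \eqref{intro:problem}, and the ``only if'' direction by the basic energy estimate combined with Gagliardo--Nirenberg (Lemma~\ref{lem:GN}).

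For the ``if'' direction, the case $T=+\infty$ is immediate, so assume $T<+\infty$ with $\lim_{t\to T}\|u\|_{X_t}=+\infty$ and suppose for contradiction $T<T_{\max}(0,\ep g)$. Then $u$ extends continuously in $H_0^1(B^c)$ to some $[0,T')$ with $T'>T$. Since $h$ is strictly positive and continuous on $[0,T]$, the quantity $\|\nabla u(t)\|_{L^2(B^c)}/h(t)$ is bounded on $[0,T]$. For the $L_{d\mu}^1$-norm I invoke finite propagation speed: if $\supp g\subset B(0,R_0)$, then $\supp u(\cdot,t)\subset\{|x|\leq R_0+t\}$, so by Cauchy--Schwarz
\[
\|u(t)\|_{L_{d\mu}^1}\leq (1+\log(R_0+t))\,\bigl|B(0,R_0+t)\bigr|^{1/2}\|u(t)\|_{L^2(B^c)},
\]
which is bounded on $[0,T]$ by continuity of $t\mapsto\|u(t)\|_{L^2(B^c)}$ on $[0,T']$. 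This yields $\|u\|_{X_T}<+\infty$, a contradiction.

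For the ``only if'' direction, only $T<+\infty$ is nontrivial. Assume $T=T_{\max}(0,\ep g)<+\infty$ and suppose for contradiction $\|u\|_{X_T}\leq M$. Then $\|\nabla u(t)\|_{L^2(B^c)}\leq Mh(t)\leq M$ on $[0,T)$; since $L_{d\mu}^1\hookrightarrow L^1(B^c)$, Lemma~\ref{lem:GN} with $q=2$ yields $\|u(t)\|_{L^2(B^c)}\leq CM$ uniformly on $[0,T)$. To control $\|\partial_t u(t)\|_{L^2(B^c)}$ I use the energy identity
\[
\frac{d}{dt}\Bigl(\tfrac12\|\partial_t u\|_{L^2(B^c)}^{2}+\tfrac12\|\nabla u\|_{L^2(B^c)}^{2}\Bigr)+\|\partial_t u\|_{L^2(B^c)}^{2}=\int_{B^c}|u|^p\,\partial_t u\,dx\leq\tfrac12\|u\|_{L^{2p}(B^c)}^{2p}+\tfrac12\|\partial_t u\|_{L^2(B^c)}^{2},
\]
together with Lemma~\ref{lem:GN} for $q=2p$, which gives $\|u(t)\|_{L^{2p}(B^c)}^{2p}\leq CM^{2p}$ uniformly on $[0,T)$. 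Integrating over $[0,t]$ with $t<T<\infty$ furnishes a uniform bound on $\|\partial_t u(t)\|_{L^2(B^c)}$, and the resulting uniform bound on $\|\partial_t u\|_{L^2(B^c)}+\|\nabla u\|_{L^2(B^c)}+\|u\|_{L^2(B^c)}$ contradicts Proposition~\ref{prop:ndw:fundamental}(ii).

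The main obstacle is the rigorous justification of the energy identity, since the weak solution of Definition~\ref{def:sol} a priori lies only in $C^1([0,T);L^2(B^c))\cap C([0,T);H_0^1(B^c))$. The standard remedy is to approximate the initial data by elements of $D(\mathcal{L}_{B^c})$, establish the identity for the classical solutions produced by $(e^{t\mathcal{L}_{B^c}})_{t\geq 0}$, and pass to the limit using the uniform estimates above. A similar approximation transfers finite propagation speed from the free linear flow to the semilinear weak solution used in the ``if'' direction.
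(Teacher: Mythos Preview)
Your argument is correct, and the ``if'' direction is essentially identical to the paper's (finite propagation plus Cauchy--Schwarz to control $\|u(t)\|_{L^1_{d\mu}}$ by $\|u(t)\|_{L^2(B^c)}$).

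For the ``only if'' direction you diverge from the paper in how you bound $\|\partial_t u(t)\|_{L^2(B^c)}$. You use the semilinear energy identity and integrate in time; the paper instead reads off the Duhamel representation from Remark~\ref{rem:S(t)},
\[
\partial_t u(t)=\varepsilon\,\partial_t S(t)g+\int_0^t \partial_t S(t-s)\,|u(s)|^p\,ds,
\]
and applies the linear energy estimate of Lemma~\ref{lem:energy-est} termwise to get
\[
\|\partial_t u(t)\|_{L^2(B^c)}\le \varepsilon\|g\|_{L^2(B^c)}+\int_0^t\|u(s)\|_{L^{2p}(B^c)}^p\,ds,
\]
followed by the same Gagliardo--Nirenberg step (Lemma~\ref{lem:GN} with $q=2p$) you use. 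The paper's route is slightly more economical: since the integral equation is the very definition of weak solution (Definition~\ref{def:sol}), no regularity justification is needed, and the ``main obstacle'' you identify---justifying the energy identity at the available regularity via an approximation in $D(\mathcal{L}_{B^c})$---simply does not arise. Your approach is of course standard and valid, but it trades one line of Duhamel for a density argument.
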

\begin{proof}
Suppose that $T_{\max}(0,\ep g)=T<+\infty$. 
If $\lim_{t\to T}\|u\|_{X_t}<+\infty$, then 
Lemma \ref{lem:GN} with $q=2$ yields
\[
\|u(t)\|_{L^2(B^c)}
\leq 
C_{{\rm GN},2}\|\nabla u(t)\|_{L^2(B^c)}^\frac{1}{2}
\|u(t)\|_{L^1(B^c)}^{\frac{1}{2}}
\leq C_{{\rm GN},2}\|u\|_{X_T}h(t)^{\frac{1}{2}}
\]
and 
Lemmas \ref{lem:energy-est} and \ref{lem:GN} with $q=2p$
give for every $t\in (0,T)$,
\begin{align*}
\|\pa_tu(t)\|_{L^2(B^c)}
&=
\left\|\int_0^t\pa_tS(t-s)|u(s)|^p\,ds\right\|_{L^2(B^c)}
\\
&\leq 
\int_0^t\|u(s)\|_{L^{2p}(B^c)}^p\,ds
\\
&\leq 
(C_{{\rm GN},2p})^p\int_0^t\|\nabla u(s)\|_{L^{2}(B^c)}^{p-\frac{1}{2}}\|u(s)\|_{L^1(B^c)}^{\frac{1}{2}}\,ds
\\
&\leq 
(C_{{\rm GN},2p})^p\|u\|_{X_T}^p\int_0^t h(s)^{p-\frac{1}{2}}\,ds.
\end{align*}
These inequalities yield 
$\|u\|_{H_0^1(B^c)}+\|\pa_t u\|_{L^2(B^c)}$ is bounded in $(0,T)$.
By Proposition \ref{prop:ndw:fundamental}, we have $T<T_{\max}(0,\ep g)$ which is contradiction. 

On the contrary, 
suppose $T<+\infty$ with $\lim_{t\to T}\|u\|_{X_t}=+\infty$. 
We fix $R>1$ such that ${\rm supp}\,g\in B(0,R)$. By finite propagation property we also 
have ${\rm supp}\,u(t)\subset B(0,R+T)$. We see from the H\"older inequality, 
that 
\begin{align*}
\|u(t)\|_{L_{d\mu}^1}
&=\int_{B^c\cap B(0,R+T)}|u(t)|(1+\log |x|)\,dx
\\
&\leq \left(
\int_{B^c\cap B(0,R+T)}(1+\log |x|)^2\,dx\right)^\frac{1}{2}
\|u(t)\|_{L^2(B^c)}
\end{align*}
and hence $\lim_{t\to T}\|u(t)\|_{H_0^1(B^c)}=+\infty$
which means that $T=T_{\max}(0,\ep g)$. 
\end{proof}

\subsection{A Gagliardo--Nirenberg type inequality with logarithmic weight}

As explained in Introduction, to 
estimate the $L_{d\mu}^1$-norm of the solution $u$ to the problem \eqref{intro:problem}, 
we have to control the quantity 
\[
\big\||u|^p\big\|_{L_{d\mu}^1}=\int_{B^c}|u(x,t)|^p(1+\log |x|)\,dx
\]
which comes from the semilinear term $|u|^p$.
The following inequality enables us to control such a quantity 
via the ingredients in $\|u\|_{X_T}$. 
The form is close to the classical Gagliardo--Nirenberg inequality 
but involving the logarithmic weight. 
Here we do not need to assume the radial symmetry for functions.

\begin{lemma}\label{lem:logGN}
For every $1<q<\infty$, 
one has $H^1_0(B^c)\cap L^1_{d\mu} \subset L^q_{d\mu}$ and 
there exists 
a positive constant $C_{{\rm GH},q}^\sharp$ such that 
\[
\|f\|_{L_{d\mu}^q}
\leq 
C_{{\rm GN},q}^\sharp\|\nabla f\|_{L^2(B^c)}^{1-\frac{1}{q}}
\|f\|_{L_{d\mu}^1}^{\frac{1}{q}}, 
\quad 
f\in H^1_0(B^c)\cap L^1_{d\mu}.
\]
\end{lemma}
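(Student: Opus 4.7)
The plan is to reduce Lemma~\ref{lem:logGN} to the classical two-dimensional Gagliardo--Nirenberg inequality (Lemma~\ref{lem:GN}) by means of the substitution $\phi := |f|\,(1+\log|x|)^{1/q}$. Writing $H(x)=1+\log|x|$ as in the proof of Lemma~\ref{lem:critHardy}, I note that $H\geq 1$ on $B^c$ and $H\equiv 1$ on $\pa B^c$; so when $f\in H_0^1(B^c)\cap L_{d\mu}^1$, the function $\phi=|f|H^{1/q}$ belongs to $H_0^1(B^c)\cap L^1(B^c)$ with the identity $\|\phi\|_{L^q(B^c)}^q=\|f\|_{L_{d\mu}^q}^q$ and, using $H^{1/q}\leq H$ for $q\geq 1$ (since $H\geq 1$), the comparison $\|\phi\|_{L^1(B^c)}=\int |f|H^{1/q}\,dx\leq \|f\|_{L_{d\mu}^1}$. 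Applying Lemma~\ref{lem:GN} to $\phi$ would then give
\[
\|f\|_{L_{d\mu}^q}^q \;\leq\; C_{{\rm GN},q}^q\,\|\nabla\phi\|_{L^2(B^c)}^{q-1}\,\|f\|_{L_{d\mu}^1},
\]
reducing the task to bounding $\|\nabla\phi\|_{L^2(B^c)}$ in terms of $\|\nabla f\|_{L^2(B^c)}$.

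To estimate $\|\nabla\phi\|_{L^2}$, I expand $\nabla\phi=H^{1/q}\,\mathrm{sgn}(f)\,\nabla f+q^{-1}|f|H^{1/q-1}\nabla H$, square, integrate by parts, and exploit the two key facts $\Delta H=0$ and $|\nabla H|^2=1/|x|^2$. The cross term becomes (up to a constant) $\int f^2\,\mathrm{div}(H^{2/q-1}\nabla H)\,dx$, which by the harmonicity of $H$ reduces to a multiple of $\int f^2/(|x|^2 H^{2-2/q})\,dx$; combining with the squared last term yields the clean identity
\[
\int_{B^c}|\nabla\phi|^2\,dx \;=\; \int_{B^c}H^{2/q}\,|\nabla f|^2\,dx \;+\; \frac{q-1}{q^2}\int_{B^c}\frac{\phi^2}{|x|^2 H^2}\,dx.
\]
The last integral is absorbed into $\|\nabla\phi\|_{L^2}^2$ via the critical Hardy inequality (Lemma~\ref{lem:critHardy}) applied to $\phi\in H_0^1(B^c)$, whose coefficient $4(q-1)/q^2$ is strictly less than $1$ except at $q=2$; the resulting bound takes the form $\|\nabla\phi\|_{L^2}^2\leq \frac{q^2}{(q-2)^2}\int H^{2/q}|\nabla f|^2\,dx$ for $q\neq 2$.

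The main obstacle is the residual integral $\int H^{2/q}|\nabla f|^2\,dx$, which is \emph{not} directly controlled by $\|\nabla f\|_{L^2(B^c)}^2$ because $H$ grows at infinity, so the substitution alone does not close. I anticipate overcoming this by first handling the borderline case $q=2$ through a weighted Nash inequality: the conservation $\frac{d}{dt}\int(e^{t\Delta_{B^c}}f)H\,dx=0$ (a consequence of $\Delta H=0$ together with the Dirichlet condition on $H$), combined with the energy identity $\frac{d}{dt}\|e^{t\Delta_{B^c}}f\|_{L_{d\mu}^2}^2=-2\int H|\nabla e^{t\Delta_{B^c}}f|^2\,dx\leq -2\|\nabla e^{t\Delta_{B^c}}f\|_{L^2(B^c)}^2$ and the $L_{d\mu}^1$-to-$L^2$ decay of Lemma~\ref{lem:L^p-wL^1}, yields after optimization in $t$ the Nash-type bound $\|f\|_{L_{d\mu}^2}^2\leq C\|\nabla f\|_{L^2(B^c)}\|f\|_{L_{d\mu}^1}$. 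The range $1<q<2$ then follows by log-convex interpolation between $L_{d\mu}^1$ and $L_{d\mu}^2$ with $\theta=2-2/q$, which reproduces the exponents $q-1$ and $1$ on the right-hand side, and the range $q>2$ is obtained by iterating the $q=2$ inequality applied to $|f|^{q/2}$ together with Lemma~\ref{lem:GN} to control the auxiliary powers that appear. The hardest part is therefore the weighted Nash step, where both the harmonicity of the weight and the semigroup decay of Lemma~\ref{lem:L^p-wL^1} enter essentially.
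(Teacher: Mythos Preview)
Your substitution $\phi=|f|H^{1/q}$ is a natural first attempt, and you correctly identify that it leaves the uncontrolled weighted integral $\int_{B^c} H^{2/q}|\nabla f|^2\,dx$ (and at $q=2$ the Hardy absorption coefficient $4(q-1)/q^2$ equals $1$, so even that step degenerates). The proposed remedy via the heat semigroup, however, does not close. Your energy identity $\frac{d}{dt}\|v(t)\|_{L_{d\mu}^2}^2=-2\int_{B^c} H|\nabla v|^2\,dx$ for $v=e^{t\Delta_{B^c}}f$ is correct, and integrating it gives
\[
\|f\|_{L_{d\mu}^2}^2=\|v(t)\|_{L_{d\mu}^2}^2+2\int_0^t\!\int_{B^c}H\,|\nabla v(s)|^2\,dx\,ds.
\]
For a Nash-type optimization you need upper bounds on both terms on the right. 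But Lemma~\ref{lem:L^p-wL^1} controls $\|v(t)\|_{L^2(B^c)}$, \emph{not} $\|v(t)\|_{L_{d\mu}^2}$; nothing in the paper (nor an obvious variant) bounds the latter in terms of $\|f\|_{L_{d\mu}^1}$ alone. And the second term is precisely the weighted Dirichlet form $\int H|\nabla v|^2$---the very quantity you could not dominate by $\|\nabla f\|_{L^2}^2$ in the first attempt; replacing it by the smaller $\|\nabla v\|_{L^2}^2$, as you do, turns the identity into a \emph{lower} bound on $\|f\|_{L_{d\mu}^2}^2$, which is the wrong direction. The bootstrap to $q>2$ via $|f|^{q/2}$ inherits the same obstruction.

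The paper avoids all of this by a continuous dyadic decomposition in the weight variable: one fixes a bump $\eta\in C_0^\infty((\tfrac12,2))$ and sets $f_R(x)=\eta(H(x)/R)\,f(x)$. On $\supp f_R$ one has $H\in[R/2,2R]$, so the weight is effectively constant, and the scaling identity $\int_0^\infty\eta(a/R)^\sigma\,dR=K_\sigma a$ turns $\int_0^\infty\|f_R\|_{L^\sigma(B^c)}^\sigma\,dR$ into $K_\sigma\|f\|_{L_{d\mu}^\sigma}^\sigma$. Applying the classical Gagliardo--Nirenberg inequality (Lemma~\ref{lem:GN}) to each $f_R$ and integrating in $R$ reduces the matter to a uniform-in-$R$ bound on $\|\nabla f_R\|_{L^2}$; the derivative of the cutoff produces a term of size $|f|/(|x|H)$, which the critical Hardy inequality (Lemma~\ref{lem:critHardy}) absorbs into $\|\nabla f\|_{L^2}$. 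The localization is what removes the unbounded weight from the gradient term; a global substitution cannot do this.
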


\begin{proof}
Let $f\in H^1_0(B^c)\cap L^1_{d\mu}$ be fixed.
Put $\eta\in C_0^\infty(\R)$ satisfying
\[
\eta(s)
\begin{cases}
>0&\text{if}\ s\in (1/2,2), 
\\
=0&\text{if}\ s\notin (1/2,2).
\end{cases}
\]
Note that for every $a>0$ and $\sigma\in [1,\infty)$, 
\begin{equation}
\label{eq:unitdecomp}
\int_{0}^\infty\eta\left(\frac{a}{R}\right)^\sigma\,dR
=a K_\sigma, 
\quad 
K_\sigma=\int_{0}^\infty\frac{\eta(\rho)^\sigma}{\rho^2}\,d\rho<\infty.
\end{equation}
Now we define the (localized) functions 
$f_R\in H^1_0(B^c)\cap L^1(B^c)$
for $R>0$ as 
\[
f_R(x)=\eta\left(\frac{1+\log |x|}{R}\right)f(x), \quad x\in B^c.
\]
We see from 
the Fubini--Tonelli theorem and 
\eqref{eq:unitdecomp} that for $1\leq \sigma <\infty$, 
\begin{align}
\label{eq:intf_R}
\int_0^\infty\|f_R\|_{L^\sigma (B^c)}^\sigma \,dR
&=
\int_{B^c} 
|f(x)|^\sigma 
\int_0^\infty\eta\left(\frac{1+\log |x|}{R}\right)^\sigma\,dR\,dx
=
K_\sigma 
\|f\|_{L_{d\mu}^\sigma}^\sigma.
\end{align}
On the other hand, by Lemma \ref{lem:GN} 
we have
\begin{align}
\label{eq:Nash}
\|f_R\|_{L^q(B^c)}^q
\leq (C_{{\rm GN},q})^q
\|\nabla f_R\|_{L^2(B^c)}^{q-1}
\|f_R\|_{L^1(B^c)}.
\end{align}
Noting that 
$\frac{1}{2}R\leq 1+\log|x| \leq 2R$
on ${\supp}\,f_R$, 
one can compute as
\begin{align*}
|\nabla f_R(x)|^2
&=
\left|
\eta\left(\frac{1+\log |x|}{R}\right)
\nabla f(x)
+
\eta'\left(\frac{1+\log |x|}{R}\right)\frac{x}{R|x|^2}f(x)
\right|^2
\\
&\leq 
2\|\eta\|_{L^\infty}^2|\nabla f(x)|^2
+
8\|\eta'\|_{L^\infty}^2
\frac{|f(x)|^2}{|x|^2(1+\log |x|)^2}.
\end{align*}
Combining the above inequality with Lemma \ref{lem:critHardy}, we deduce
\begin{align}
\label{eq:crit-Hardy}
\|\nabla f_R\|_{L^2(B^c)}^2\leq 
\left(2\|\eta\|_{L^\infty}^2+
32\|\eta'\|_{L^\infty}^2
\right)
\|\nabla f\|_{L^2(B^c)}^2.
\end{align}
Consequently, 
using \eqref{eq:Nash}, \eqref{eq:crit-Hardy} 
and \eqref{eq:intf_R} with $\sigma=q$ and also $\sigma=1$, we obtain
\begin{align*}
K_q\|f\|_{L_{d\mu}^q}^q
&=\int_0^\infty \|f_R\|_{L^q(B^c)}^q\,dR
\\
&\leq (C_{{\rm GN},q})^q
\int_0^\infty \|\nabla f_R\|_{L^2(B^c)}^{q-1}\|f_R\|_{L^1(B^c)}
\,dR
\\
&\leq 
(C_{{\rm GN},q})^q
\left(2\|\eta\|_{L^\infty}^2+
32\|\eta'\|_{L^\infty}^2
\right)^{\frac{q-1}{2}}\|\nabla f\|_{L^2(B^c)}^{q-1}
\int_0^\infty \|f_R\|_{L^1(B^c)}
\,dR
\\
&= 
K_1
(C_{{\rm GN},q})^q
\left(2\|\eta\|_{L^\infty}^2+
32\|\eta'\|_{L^\infty}^2
\right)^{\frac{q-1}{2}}
\|\nabla f\|_{L^2(B^c)}^{q-1}\|f\|_{L_{d\mu}^1}.
\end{align*}
The proof is complete.
\end{proof}

\subsection{Lower bound for lifespan of semilinear problem}

Here we discuss a priori estimate for $u$ via the quantity $\|u\|_{X_T}$. 
The first part is the derivation of $L_{d\mu}^1$-estimate 
as a merit of the positively preserving property of $S(t)$ 
for the radially symmetric functions. 
\begin{lemma}\label{lem:semilinear:wL1est}
Let $u$ be the weak solution 
of \eqref{intro:problem} 
in $(0,T)$ 
with 
$g\in L_{\rm rad}^2\cap L_{d\mu}^1$ 
and let $\|u\|_{X_T}$ be given in \eqref{eq:X_Tnorm}. 
Then there exists a positive constant $C_2$ 
(independent of $\ep, g$ and $T$) such that 
for every $t\in (0,T)$,
\[
\|u(t)\|_{L^1_{d\mu}}\leq 
\ep\|g\|_{L^1_{d\mu}}
+
C_2\|u\|_{X_T}^p\int_0^t{h(s)}^{p-1}\,ds. 
\]
\end{lemma}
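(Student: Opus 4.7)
The plan is to apply the Duhamel representation
\[
u(t) = \ep S(t) g + \int_0^t S(t-s) |u(s)|^p \, ds
\]
(which follows from Remark \ref{rem:S(t)} with $u_0 = 0$ and $u_1 = \ep g$), take its $L^1_{d\mu}$-norm, and estimate each piece by combining the linear $L^1_{d\mu}$-bound in Lemma \ref{lem:linear:wL1est} with the weighted Gagliardo--Nirenberg inequality in Lemma \ref{lem:logGN}.

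First I would verify that $u(\cdot,t)$ is radially symmetric for every $t \in [0,T)$. This follows from the uniqueness asserted in Proposition \ref{prop:ndw:fundamental}(i): because the equation, the Dirichlet boundary condition, and the data $(0,\ep g)$ are all rotationally invariant, for any rotation $R$ of $\R^2$ the map $(x,t) \mapsto u(Rx,t)$ is again a weak solution with the same data and hence equals $u$. Consequently, for each $s \in [0,T)$, the function $|u(s)|^p$ is nonnegative and radially symmetric; the two-dimensional Sobolev embedding $H_0^1(B^c) \hookrightarrow L^{2p}(B^c)$ yields $|u(s)|^p \in L^2(B^c)$, while Lemma \ref{lem:logGN} with $q = p$ gives $|u(s)|^p \in L^1_{d\mu}$. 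Hence Lemma \ref{lem:linear:wL1est} applies both to $g$ and, for each $s \in [0,t)$, to $|u(s)|^p$.

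Taking $L^1_{d\mu}$-norms in the Duhamel identity and using $1 - e^{-\tau} \leq 1$ in Lemma \ref{lem:linear:wL1est}, I would obtain
\[
\|u(t)\|_{L^1_{d\mu}} \leq \ep \|g\|_{L^1_{d\mu}} + \int_0^t \||u(s)|^p\|_{L^1_{d\mu}} \, ds.
\]
Then Lemma \ref{lem:logGN} with $q = p$ gives
\[
\||u(s)|^p\|_{L^1_{d\mu}} = \|u(s)\|_{L^p_{d\mu}}^p \leq (C_{{\rm GN},p}^\sharp)^p \|\nabla u(s)\|_{L^2(B^c)}^{p-1} \|u(s)\|_{L^1_{d\mu}},
\]
and the definition of $\|u\|_{X_T}$ in \eqref{eq:X_Tnorm} provides $\|\nabla u(s)\|_{L^2(B^c)} \leq h(s) \|u\|_{X_T}$ and $\|u(s)\|_{L^1_{d\mu}} \leq \|u\|_{X_T}$. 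Substituting yields the claimed inequality with $C_2 = (C_{{\rm GN},p}^\sharp)^p$.

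The one technical point that needs care is the interchange of $\|\cdot\|_{L^1_{d\mu}}$ with the Duhamel time integral, namely the Bochner integrability of $s \mapsto S(t-s)|u(s)|^p$ in $L^1_{d\mu}$. If continuity of this integrand is not directly available from $u \in C([0,T); H_0^1(B^c))$ together with Lemmas \ref{lem:logGN} and \ref{lem:linear:wL1est}, one can first establish the estimate for compactly supported radial approximations $g_n$ of $g$ (for which the associated solutions have spatially compact support by finite propagation, as used in the proof of Lemma \ref{lem:alternative-X_T}) and then pass to the limit $n \to \infty$, invoking the bounded extension of $S(\tau)$ on $L^2_{\rm rad} \cap L^1_{d\mu}$ from Lemma \ref{lem:linear:wL1est}.
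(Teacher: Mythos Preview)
Your proof is correct and follows essentially the same route as the paper's: apply Lemma~\ref{lem:linear:wL1est} to each term of the Duhamel formula and then bound $\||u(s)|^p\|_{L^1_{d\mu}}$ via Lemma~\ref{lem:logGN} with $q=p$ together with the definition of $\|u\|_{X_T}$, arriving at the same constant $C_2=(C_{{\rm GN},p}^\sharp)^p$. Your additional remarks on the radial symmetry of $u$ and on the Bochner integrability of the Duhamel integrand are justified and simply make explicit points the paper leaves implicit.
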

\begin{proof}
Employing Lemma \ref{lem:logGN} with $q=p$, we have
for every $s\in (0,T)$, 
\begin{align*}
\|u(s)\|_{L^p_{d\mu}}^p
&\leq (C_{{\rm GN},p}^\sharp)^p\|\nabla u(s)\|_{L^2}^{p-1}
\|u(s)\|_{L^1_{d\mu}}
\\
&\leq (C_{{\rm GN},p}^\sharp)^p\|u\|_{X_T}^p h(s)^{p-1}.
\end{align*}
Combining the above inequality 
with Lemma \ref{lem:linear:wL1est}, 
we deduce that for every $t\in (0,T)$, 
\begin{align*}
\|u(t)\|_{L^1_{d\mu}}
&\leq \ep \|S(t)g\|_{L_{d\mu}^1}
+\int_0^t\|S(t-s)|u(s)|^p\|_{L_{d\mu}^1}\,ds
\\
&\leq \ep \|g\|_{L_{d\mu}^1}
+\int_0^t\|u(s)\|_{L_{d\mu}^p}^p\,ds
\\
&\leq \ep \|g\|_{L_{d\mu}^1}
+(C_{{\rm GN},p}^\sharp)^p\|u\|_{X_T}^p\int_0^t h(s)^{p-1}
\,ds.
\end{align*}
This is the desired inequality. 
\end{proof}
\begin{lemma}\label{lem:semilinear:est-grad}
Let $u$ be the weak solution of \ref{intro:problem}
in $(0,T)$ 
with 
$g\in L_{\rm rad}^2\cap L_{d\mu}^1$ 
and let $\|u\|_{X_T}$ be given in \eqref{eq:X_Tnorm}. 
Then there exists a positive constant $C_3$ 
(independent of $\ep, g$ and $T$) such that 
for every $t\in (0,T)$,
\[
\|\nabla u(t)\|_{L^2(B^c)}\leq 
C_3h(t)
\left(
\ep (\|g\|_{L^1_{d\mu}}+\|g\|_{L^2(B^c)})
+
\|u\|_{X_T}^p\int_0^th(s)^{p-1}\,ds 
\right).
\]
\end{lemma}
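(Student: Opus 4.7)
The natural starting point is the Duhamel representation from Remark \ref{rem:S(t)} with $u_0=0$,
\[
u(t) = \ep S(t) g + \int_0^t S(t-s)\,|u(s)|^p \, ds,
\]
after which the triangle inequality in $L^2(B^c)$ splits the task into a linear piece and a Duhamel piece. The linear piece is handled at once by Lemma \ref{lem:linear:est-grad} with $q=1$, yielding the first term $C\ep h(t)(\|g\|_{L^1_{d\mu}}+\|g\|_{L^2(B^c)})$ of the claim.

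For the Duhamel piece, my plan is to apply Lemma \ref{lem:linear:est-grad} with $q=1$ pointwise in $s$ to the source $|u(s)|^p$, which reduces matters to bounding $\||u(s)|^p\|_{L^1_{d\mu}}$ and $\||u(s)|^p\|_{L^2(B^c)}$. The first norm is $\|u(s)\|_{L^p_{d\mu}}^p$, controlled by the logarithmic Gagliardo--Nirenberg inequality (Lemma \ref{lem:logGN}), which together with the definition of $\|u\|_{X_T}$ gives $C\|u\|_{X_T}^p h(s)^{p-1}$. The second norm is $\|u(s)\|_{L^{2p}(B^c)}^p$, controlled by the classical Gagliardo--Nirenberg inequality (Lemma \ref{lem:GN}) with $q=2p$ combined with $\|u(s)\|_{L^1(B^c)}\leq \|u(s)\|_{L^1_{d\mu}}\leq\|u\|_{X_T}$, producing the slightly faster rate $C\|u\|_{X_T}^p h(s)^{p-1/2}$; since $h\leq 1$ this is dominated by the previous rate.

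After these substitutions the Duhamel piece is bounded by $C\|u\|_{X_T}^p\int_0^t h(t-s) h(s)^{p-1}\,ds$, and the principal task is to extract $h(t)$ as a prefactor against $\int_0^t h(s)^{p-1}\,ds$. I plan to split at $s=t/2$: on $[0,t/2]$ the monotonicity and mild doubling property $h(t-s)\leq h(t/2)\leq C h(t)$ produce the required $h(t)$ factor directly; on $[t/2,t]$ the substitution $r=t-s$ together with the uniform bound $h(t-r)^{p-1}\leq Ch(t)^{p-1}$ on $r\in[0,t/2]$ reduces matters to the one-sided comparison $h(t)^{p-2}\int_0^{t/2} h(r)\,dr\leq C\int_0^t h(s)^{p-1}\,ds$, which is handled by the monotonicity of $h$ separately in the cases $p\geq 2$ and $1<p<2$. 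The small-time regime $t\leq 1$ is absorbed trivially via Lemma \ref{lem:energy-est}.

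The main obstacle I anticipate is precisely this convolution comparison: whereas Lemma \ref{lem:semilinear:wL1est} enjoyed the absorbing bound $\|S(t-s)\phi\|_{L^1_{d\mu}}\leq \|\phi\|_{L^1_{d\mu}}$ from Lemma \ref{lem:linear:wL1est}, here the decaying Matsumura kernel $h(t-s)$ must be pulled out of the integral as $h(t)$, and obtaining a uniform constant $C_3$ across the whole range $1<p<\infty$ is the sensitive calculation, particularly in the subcritical regime $1<p<2$ where both sides of the comparison grow in $t$ and the logarithmic weight inside $h$ makes the competing rates delicate to match.
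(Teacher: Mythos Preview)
Your decomposition and the treatment of the linear term and of the $[0,t/2]$ half of the Duhamel integral match the paper exactly. The genuine gap is in the $[t/2,t]$ half, precisely at the point you flagged as delicate.

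Applying Lemma~\ref{lem:linear:est-grad} with $q=1$ on $[t/2,t]$ leaves you with $\int_{t/2}^t h(t-s)h(s)^{p-1}\,ds$, and after your substitution $r=t-s$ and the bound $h(t-r)^{p-1}\leq Ch(t)^{p-1}$ you must verify
\[
h(t)^{p-2}\int_0^{t/2}h(r)\,dr \;\leq\; C\int_0^t h(s)^{p-1}\,ds.
\]
For $1<p<2$ this is \emph{false}. As $t\to\infty$ the left-hand side behaves like $(1+t)^{2-p}(1+\log(1+t))^{2-p}\log\log(1+t)$ (since $\int_0^{t/2}h(r)\,dr=\log(1+\log(1+t/2))$), whereas the right-hand side is of order $(1+t)^{2-p}(1+\log(1+t))^{1-p}$; the quotient grows like $(1+\log(1+t))\log\log(1+t)$, so no constant $C$ works. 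Monotonicity of $h$ alone cannot repair this: the full kernel $h(t-s)$ integrates to a double logarithm over $[t/2,t]$, and that extra factor cannot be absorbed in the subcritical range. Your argument does go through for $p\geq2$ (the left side then decays), but the lemma is claimed for all $p>1$.

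The paper's fix is to apply Lemma~\ref{lem:linear:est-grad} with $q=2$ (not $q=1$) on the near piece $J_2=\int_{t/2}^t$. This trades the kernel $h(t-s)$ for $h(t-s)^{1/2}$ and replaces the $L^1_{d\mu}+L^2$ norm of $|u(s)|^p$ by its $L^2_{d\mu}$ norm, i.e.\ $\|u(s)\|_{L^{2p}_{d\mu}}^p$, which Lemma~\ref{lem:logGN} with $q=2p$ bounds by $C\|u\|_{X_T}^p h(s)^{p-1/2}$. On $[t/2,t]$ one has $h(s)^{p-1/2}\leq Ch(t/2)^{p-1/2}$, and the remaining integral $\int_0^{t/2}h(r)^{1/2}\,dr$ satisfies the elementary bound $\int_0^\tau h(r)^{1/2}\,dr\leq k_0\,\tau\, h(\tau)^{1/2}$, yielding $\|\nabla J_2(t)\|_{L^2}\leq C\|u\|_{X_T}^p\,t\,h(t/2)^p$. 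The closing comparison $t\,h(t/2)^{p-1}\leq 2\int_0^{t/2}h(s)^{p-1}\,ds$ is immediate from $h(s)\geq h(t/2)$ on $[0,t/2]$ and holds uniformly in $p>1$. The essential point is that the half-power kernel $h^{1/2}$ integrates only to order $\tau h(\tau)^{1/2}$, while your full-power kernel $h$ integrates to a double logarithm that the right-hand side cannot match when $p<2$.
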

\begin{proof}
Set 
\[
J_1(t)=
\int_0^\frac{t}{2}S(t-s)|u(s)|^p\,ds, \quad 
J_2(t)=
\int_\frac{t}{2}^t S(t-s)|u(s)|^p\,ds.
\]
Then it is obvious that 
$u(t)=\ep  S(t)g+J_1(t)+J_2(t)$. 
The estimate for the linear part $\ep S(t)g$ has already done in Lemma \ref{lem:X_T:linear}. 
For the estimate of $J_1(t)$, 
applying Lemma \ref{lem:linear:est-grad} 
with $q=1$, we see that 
\begin{align*}
\|\nabla J_1(t)\|_{L^2(B^c)}
&\leq 
\int_0^{\frac{t}{2}}
\|\nabla S(t-s)|u(s)|^p\|_{L^2(B^c)}\,ds
\\
&\leq
C_{{\rm M},1,1}^\sharp\int_0^{\frac{t}{2}}h(t-s)
\Big(\|u(s)\|_{L^p_{d\mu}}^p+\|u(s)\|_{L^{2p}(B^c)}^{p}\Big)\,ds.
\end{align*}
The integrands in the right-hand side of the above inequality 
are estimated as follows: 
via Lemma \ref{lem:logGN} with $q=p$, one has
\begin{align*}
\|u(s)\|_{L^p_{d\mu}}^p
&\leq 
(C_{{\rm GN},p}^\sharp)^p
\|\nabla u(s)\|_{L^2(B^c)}^{p-1}\|u(s)\|_{L^1_{d\mu}}
\\
&\leq 
(C_{{\rm GN},p}^\sharp)^p\|u\|_{X_T}^ph(s)^{p-1}
\end{align*}
and via Lemma \ref{lem:GN} with $q=2p$, one has
\begin{align*}
\|u(s)\|_{L^{2p}(B^c)}^{p}
&\leq 
(C_{{\rm GN},2p})^{p}
\|\nabla u(s)\|_{L^2(B^c)}^{p-\frac{1}{2}}\|u(s)\|_{L^1(B^c)}^{\frac{1}{2}}
\\
&\leq 
(C_{{\rm GN},2p})^{p}
\|u\|_{X_T}^p
h(s)^{p-\frac{1}{2}}.
\end{align*}
Noting that $h(s)^{p-\frac{1}{2}}\leq h(s)^{p-1}$, we can deduce that 
\begin{align*}
\|\nabla J_1(t)\|_{L^2(B^c)}
&\leq 
C_{{\rm M},1,1}^\sharp
\big((C_{{\rm GN},p}^\sharp)^p+(C_{{\rm GN},2p})^p\big)
\|u\|_{X_T}^p\int_0^{\frac{t}{2}}h(t-s)h(s)^{p-1}\,ds
\\
&\leq 
C_{{\rm M},1,1}^\sharp
\big((C_{{\rm GN},p}^\sharp)^p+(C_{{\rm GN},2p})^p\big)
\|u\|_{X_T}^ph(t/2)\int_0^{\frac{t}{2}}h(s)^{p-1}\,ds.
\end{align*}
For the estimate of $J_2(t)$, 
applying Lemma \ref{lem:linear:est-grad} 
with $q=2$
and Lemma \ref{lem:logGN} with $q=2p$, 
we can also compute in a similar way:
\begin{align*}
\|\nabla J_2(t)\|_{L^2(B^c)}
&\leq 2C_{{\rm M},1,2}^\sharp
\int_{\frac{t}{2}}^t
h(t-s)^{\frac{1}{2}}\|u(s)\|_{L_{d\mu}^{2p}}^p\,ds
\\
&\leq 
2C_{{\rm M},1,2}^\sharp(C_{{\rm GN},2p}^\sharp)^p\|u\|_{X_T}^p
\int_{\frac{t}{2}}^th(t-s)^{\frac{1}{2}}h(s)^{p-\frac{1}{2}}\,ds
\\
&\leq 
k_0 C_{{\rm M},1,2}^\sharp(C_{{\rm GN},2p}^\sharp)^p\|u\|_{X_T}^p
t
h(t/2)^{p}
\\
&\leq 
2k_0 C_{{\rm M},1,2}^\sharp(C_{{\rm GN},2p}^\sharp)^p\|u\|_{X_T}^p
h(t/2)\int_0^{\frac{t}{2}}h(s)^{p-1}\,ds,
\end{align*}
where we have used the inequality
\[
\int_0^\tau h(s)^{\frac{1}{2}}\,ds\leq k_0\tau h(\tau)^{\frac{1}{2}}, \quad \tau\in (0,\infty)
\]
for some $k_0>0$.
Combining these inequalities and 
noting $h(t/2)\leq 4h(t)$, 
we arrive at
the desired estimate.
\end{proof}

Summarizing the above two lemmas, we conclude the following
\begin{proposition}\label{prop:best-est}
Let $u$ be the weak solution of \ref{intro:problem}
in $(0,T)$ 
with 
$g\in L_{\rm rad}^2\cap L_{d\mu}^1$ 
and let $\|u\|_{X_T}$ be given in \eqref{eq:X_Tnorm}. 
Then there exists a positive constant $C_4$ 
(independent of $\ep, g$ and $T$) such that 
for every $t\in (0,T)$,
\begin{align*}
\|u\|_{X_t}
\leq C_4\left(\ep \big(\|g\|_{L_{d\mu}^1}+\|g\|_{L^2(B^c)}\big)+
\|u\|_{X_t}^p
\int_0^Th(s)^{p-1}\,ds\right).
\end{align*}
\end{proposition}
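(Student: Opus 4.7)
The proposition is essentially a repackaging of the two preceding lemmas into the single norm $\|\cdot\|_{X_t}$ defined in \eqref{eq:X_Tnorm}, so the plan is straightforward and amounts to adding them together in a compatible way.

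First I would write the weak solution $u$ in the Duhamel form
\[
u(t) = \ep S(t)g + \int_0^t S(t-s)|u(s)|^p\,ds
\]
and note that the $X_t$-norm decomposes as the sum of an $L^1_{d\mu}$-term and a weighted gradient term. Lemma \ref{lem:semilinear:wL1est} already controls the first ingredient: for every $t \in (0,T)$,
\[
\|u(t)\|_{L^1_{d\mu}} \leq \ep \|g\|_{L^1_{d\mu}} + C_2 \|u\|_{X_T}^p \int_0^t h(s)^{p-1}\,ds,
\]
which is of exactly the desired shape and in particular is bounded by the right-hand side with constant $\max\{1,C_2\}$.

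Next I would divide the conclusion of Lemma \ref{lem:semilinear:est-grad} by $h(t)$ (which is strictly positive) to obtain
\[
\frac{\|\nabla u(t)\|_{L^2(B^c)}}{h(t)} \leq C_3\Big(\ep (\|g\|_{L^1_{d\mu}}+\|g\|_{L^2(B^c)}) + \|u\|_{X_T}^p \int_0^t h(s)^{p-1}\,ds\Big).
\]
Since the factor $h(t)$ has been absorbed on the left, this inequality is uniform in $t \in (0,T)$ with the same right-hand side as before.

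Finally, I would add the two pointwise estimates and take the supremum over $t \in (0,T)$; because each integral $\int_0^t h(s)^{p-1}\,ds$ is nondecreasing in $t$ and bounded by $\int_0^T h(s)^{p-1}\,ds$, the supremum passes through cleanly and yields the claimed bound with $C_4 := C_2 + C_3$ (or any larger universal constant). There is no real obstacle here; the only mild point to be careful about is that the $\|u\|_{X_T}$ appearing on the right in Lemmas \ref{lem:semilinear:wL1est} and \ref{lem:semilinear:est-grad} may be replaced by $\|u\|_{X_t}$ by monotonicity of the quantity in $t$, which is what makes the continuity/bootstrap argument in the subsequent proof of the lifespan lower bound work.
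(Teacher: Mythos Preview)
Your proof is correct and follows exactly the approach the paper intends: the paper itself offers no argument beyond the phrase ``Summarizing the above two lemmas, we conclude the following,'' and your proposal simply makes that summary explicit by adding the $L^1_{d\mu}$-bound from Lemma~\ref{lem:semilinear:wL1est} to the $h(t)^{-1}$-weighted gradient bound from Lemma~\ref{lem:semilinear:est-grad} and taking the supremum. The one cosmetic point is that to obtain $\|u\|_{X_t}$ on the left you should phrase the supremum as being over $[0,t)$ rather than $(0,T)$, but your closing remark about replacing $\|u\|_{X_T}$ by $\|u\|_{X_t}$ via monotonicity shows you already have this in hand.
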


\begin{proof}[Proof of Theorem \ref{thm:main1}]

Put $|\!|\!|g|\!|\!|=\|g\|_{L_{d\mu}^1}+\|g\|_{L^2(B^c)}$ to 
shorten the notation. 
We first prove the assertion in Theorem \ref{thm:main1} 
when $g$ has a compact support. 
In this case, a similar discussion with $\zeta(x,t)$ as in Lemma \ref{lem:linear:wL1est}, 
we can find that $u\in C([0,T);L_{d\mu}^1)$ 
and therefore the function $t\in (0,T_{\max}(0,\ep g))\mapsto \|u\|_{X_t}$ is non-decreasing and continuous. 
In view of Lemma \ref{lem:alternative-X_T}, 
we can take
\[
T_*=\sup\{t\in (0,T_{\max}(0,\ep g)]\;;\;\|u\|_{X_{t}}\leq 2C_4 \ep |\!|\!|g|\!|\!|\}.
\]
Then by Proposition \ref{prop:best-est} we have
\[
\|u\|_{X_{T_*}}
\leq C_4\ep |\!|\!|g|\!|\!| 
\left(1+
(2C_4)^p|\!|\!|g|\!|\!|^{p-1}\ep^{p-1}
\int_0^{T_*}h(s)^{p-1}\,ds\right).
\]
This yields that $T_*$ has the lower bound $T_*\geq T_\ep$ with 
\begin{equation}\label{eq:Tep}
T_\ep =\sup 
\left\{t\in (0,\infty]\;;\;
(2C_4)^p|\!|\!|g|\!|\!|^{p-1}
\int_0^{t}h(s)^{p-1}\,ds\leq \frac{1}{\ep^{p-1}}
\right\}.
\end{equation}
Since
\[
\int_0^{t}h(s)^{p-1}\,ds
\begin{cases}
\leq k_p(1+t)^{2-p}(1+\log (1+t))^{1-p} &\text{if}\ 1<p<2, 
\\
=\log(1+\log (1+t)) &\text{if}\ p=2,
\\
<+\infty&\text{if}\ p>2
\end{cases}
\]
(for some positive constants $k_p$), 
the lifespan estimates for $1<p\leq 2$ and 
the existence of global weak solution of \eqref{intro:problem} are proved. 

Next we consider the case where 
$g\in L^2_{\rm rad}\cap L_{d\mu}^1$ does not have compact support.  
In this case, we use a cut-off argument. 
Put $g_n=g\chi_{B^c\cap B(0,n)}$. Note that 
$|\!|\!|g_{n}|\!|\!|\leq |\!|\!|g|\!|\!|$.
By the first step, we have 
the respective solutions 
\[
u_n\in C([0,T_\ep);H^1_0(B^c))\cap C^1([0,T_\ep);L^2(B^c))\cap C([0,T_\ep);L_{d\mu}^1)
\]
satisfying 
\[
\|u_n\|_{X_{T_{\ep}}}\leq 2C_4\ep |\!|\!|g_{n}|\!|\!|\leq 2C_4\ep |\!|\!|g|\!|\!|;
\]
it should be noticed that the $T_\ep$ can be chosen as in \eqref{eq:Tep} 
(independent of $n$).
Noting that for every $1\leq q<\infty$, 
we see that 
there exist  positive constants $C_{5,q}$ and $C_{5,q}'$ such that 
\[
\|u_n(t)\|_{L_{d\mu}^q}\leq C_{5,q}\ep , 
\quad
\|u_n(t)-u_m(t)\|_{L_{d\mu}^q}\leq C_{5,q}' y(t),
\]
where 
\[
y(t)=\|u_n(t)-u_m(t)\|_{L_{d\mu}^1}+
\|\nabla u_n(t)-\nabla u_m(t)\|_{L^2(B^c)}.
\]
Therefore 
we see from 
Lemmas \ref{lem:linear:wL1est} and \ref{lem:energy-est}
with 
the inequality $||z_1|^p-|z_2|^p|\leq p(|z_1|+|z_2|)^{p-1}|z_1-z_2|\ (z_1,z_2\in \R)$ that 
\begin{align*}
&\|u_n(t)-u_m(t)\|_{L_{d\mu}^1}
\\
&\leq 
\ep \|g_n-g_m\|_{L_{d\mu}^1}
+
\int_{0}^t
\big\||u_n(s)|^p-|u_m(s)|^p]\big\|_{L_{d\mu}^1}\,ds
\\
&\leq 
\ep \|g_n-g_m\|_{L_{d\mu}^1}
+
p\int_{0}^t
\big(\|u_n(s)\|_{L_{d\mu}^p}+\|u_m(s)\|_{L_{d\mu}^p}\big)^{p-1}
\|u_n(s)-u_m(s)\|_{L_{d\mu}^p}\,ds
\\
&\leq 
\ep \|g_n-g_m\|_{L_{d\mu}^1}
+
p
(2C_{5,p}\ep )^{p-1}C_{5,p}'\int_{0}^ty(s)\,ds
\end{align*}
and 
\begin{align*}
&\|\nabla u_n(t)-\nabla u_m(t)\|_{L^2(B^c)}
\\
&\leq 
\ep \|g_n-g_m\|_{L^2(B^c)}
+
\int_{0}^t
\big\||u_n(s)|^p-|u_m(s)|^p]\big\|_{L^2(B^c)}\,ds
\\
&\leq 
\ep \|g_n-g_m\|_{L^2(B^c)}
+
p
\int_{0}^t
\big(\|u_n(s)\|_{L_{d\mu}^{2p}}+\|u_m(s)\|_{L_{d\mu}^{2p}}\big)^{p-1}
\|u_n(s)-u_m(s)\|_{L_{d\mu}^{2p}}\,ds
\\
&\leq 
\ep \|g_n-g_m\|_{L_{d\mu}^1}
+
p
(2C_{5,2p}\ep)^{p-1}C_{5,2p}'\int_{0}^ty(s)\,ds.
\end{align*}
These imply 
\[
y(t)\leq \ep |\!|\!|g_n-g_m|\!|\!|
+
C_{6,p}\ep^{p-1}
\int_{0}^ty(s)\,ds,\quad t\in [0,T_\ep)
\]
with $C_{6,p}=p(2C_{5,p})^{p-1}C_{5,p}'+p(2C_{5,2p})^{p-1}C_{5,2p}'$. 
By the Gronwall inequality we obtain
\[
y(t)\leq \ep |\!|\!|g_n-g_m|\!|\!|e^{C_{6,p}\ep^{p-1}t}, \quad t\in [0,T_\ep)
\]
which ensures that $\{u_n\}_n$ is a Cauchy sequence in 
the Banach space $C(I;H_0^1(B^c)\cap L_{d\mu}^1)$ for any compact interval $I\subset [0,T_\ep)$. 
Since the limit $u$ satisfies 
\[
u(t)=S(t)g+\int_0^tS(t-s)|u(s)|^p\,ds,  
\quad t\in [0,T_\ep),  
\] 
the proof of this part ($g$ with non-compact support) is complete.

It only remains to show the decay estimate for $\pa_tu$ when 
$u$ is the global weak solution of \eqref{intro:problem} 
with $p>2$ and $|\!|\!|g|\!|\!|$ is sufficiently small verifying $T_\ep=\infty$. 
Then 
using Lemma \ref{lem:linear:est-grad}, we have
\begin{align*}
\|\pa_tu(t)\|_{L^2(B^c)}
&
\leq 
\ep \|\pa_tS(t)g\|_{L^2(B^c)}
+
\int_{0}^t\|\pa_t S(t-s)|u(s)|^p\|_{L^2(B^c)}\,ds
\\
&
\leq 
C_{{\rm M},2,1}^\sharp \ep (1+t)^{-\frac{1}{2}}h(t)|\!|\!|g|\!|\!|
+
C_{{\rm M},2,1}^\sharp
\int_{0}^{\frac{t}{2}}(1+t-s)^{-\frac{1}{2}}h(t-s)|\!|\!||u(s)|^p|\!|\!|\,ds
\\
&\quad +
2C_{{\rm M},2,2}^\sharp
\int_{\frac{t}{2}}^t(1+t-s)^{-\frac{1}{2}}h(t-s)^{\frac{1}{2}}\||u(s)|^p\|_{L_{d\mu}^2}\,ds.
\end{align*}
The rest of the proof of the boundedness is similar to the proof of Lemma \ref{lem:semilinear:est-grad} (the difference is just 
the validity of $\|u\|_{X_\infty}\leq 2C_4\ep |\!|\!|g|\!|\!|$). 
The proof is complete.
\end{proof}

\section*{Appendix}
\renewcommand{\thesection}{A}
\setcounter{theorem}{0}

Here we give an alternative proof of 
the $L^p$-$L^q$ type estimate (Lemma \ref{lem:L^p-wL^1}) 
involving the logarithmic weight 
for the Dirichlet heat semigroup $e^{t\Delta_{B^c}}$,
which describes the peculiarity of the two-dimensional exterior domain. 
Here we shall discuss it via the classical comparison principle for parabolic equations. 
Although all statements here can be shown for general exterior domains, 
we only pay our attention to the typical case $B^c$.  
A similar treatment also can be found in Sobajima \cite{Sobajima2021}.
\begin{lemma}\label{lem:apdx:1}
For every $q\in [1,\infty]$, there exists 
a positive constant $C_{{\rm A},1,q}$ such that 
if $f\in L^q(B^c)$, then
\[
|e^{t\Delta_{B^c}}f(x)|\leq
\frac{C_{{\rm A},1,q}\|f\|_{L^q(B^c)}}{t^{\frac{1}{q}}(1+\log(1+t))}
(1+\log |x|), \quad (x,t)\in B^c\times (0,\infty).
\]
\end{lemma}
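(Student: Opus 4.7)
The plan is to prove the bound by the parabolic comparison principle, first establishing the case $q=\infty$ via an explicit supersolution and then deducing the general case by a semigroup plus ultracontractivity argument. Since $|e^{t\Delta_{B^c}}f|\le e^{t\Delta_{B^c}}|f|$ by positivity-preservation of the Dirichlet heat semigroup, I may assume $0\le f$, and I set $u=e^{t\Delta_{B^c}}f$. The positive harmonic function $1+\log|x|$, which satisfies $\Delta(1+\log|x|)=0$ in $B^c$ and equals $1$ on $\pa B^c$, is the natural spatial profile for the problem.

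For the case $q=\infty$ with $M:=\|f\|_{L^\infty(B^c)}$ the goal is
\[
u(x,t)\le \frac{CM}{1+\log(1+t)}\bigl(1+\log|x|\bigr).
\]
I construct an explicit supersolution $\bar u$ of $\pa_t-\Delta$ on $B^c\times(0,\infty)$ with $\bar u(\cdot,0)\ge M$, $\bar u\ge 0$ on $\pa B^c$, and $\bar u(x,t)\le CM(1+\log|x|)/(1+\log(1+t))$ pointwise. The naive candidate $AM(1+\log|x|)/(1+\log(1+t))$ is unfortunately a \emph{sub}solution (its Laplacian vanishes while its time derivative is negative), so a genuine supersolution must trade off the negative time-decay against a positive spatial contribution: one may, for instance, glue the trivial supersolution $\bar u\equiv M$ on the far region $\{|x|\ge 1+t\}$, where the bound reduces to the maximum principle $u\le M$, with a modified logarithmic profile on the complementary region, or multiply $1+\log|x|$ by a Gaussian-type cutoff $e^{-\beta|x|^2/(1+t)}$ to produce a positive Laplacian contribution large enough to dominate the negative time derivative. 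Having verified the parabolic inequality, the boundary condition and the initial condition, the classical comparison principle on the truncated domains $\{1<|x|<R\}\times(0,T)$ gives $u\le\bar u$, and letting $R\to\infty$ (using boundedness of $u$ and $\bar u$) yields the $q=\infty$ estimate.

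For general $q\in[1,\infty)$ I apply the semigroup identity $e^{t\Delta_{B^c}}=e^{(t/2)\Delta_{B^c}}\circ e^{(t/2)\Delta_{B^c}}$ together with the standard ultracontractive bound
\[
\|e^{(t/2)\Delta_{B^c}}f\|_{L^\infty(B^c)}\le Ct^{-1/q}\|f\|_{L^q(B^c)},
\]
which follows from the pointwise domination $0\le e^{s\Delta_{B^c}}f_+\le e^{s\Delta_{\R^2}}\widetilde f_+$ (with $\widetilde f_+$ the zero-extension of $f_+$ to $\R^2$) and the usual two-dimensional Gaussian estimate. Applying the $q=\infty$ bound to $e^{(t/2)\Delta_{B^c}}f$ with time $t/2$ in place of $t$, and using that $1+\log(1+t/2)$ is comparable to $1+\log(1+t)$ for all $t>0$, then yields the claimed estimate.

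The main technical obstacle is the supersolution construction in the $q=\infty$ step: the natural logarithmic ansatz lies on the wrong side of the parabolic inequality, so an honest supersolution requires either an added positive spatial contribution or a piecewise gluing that matches continuously across the interface, together with a verification of the initial inequality $\bar u(\cdot,0)\ge M$ uniformly on the whole of $B^c$. This is precisely where the two-dimensional recurrent nature of Brownian motion, responsible for the extra logarithmic decay factor $1+\log(1+t)$ absent in higher-dimensional exterior problems, enters the analysis in an essential way.
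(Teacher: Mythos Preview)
Your overall plan is sound, and the reduction of the general-$q$ case to $q=\infty$ via $e^{t\Delta_{B^c}}=e^{(t/2)\Delta_{B^c}}\circ e^{(t/2)\Delta_{B^c}}$ together with the ultracontractive bound $\|e^{(t/2)\Delta_{B^c}}f\|_{L^\infty}\le Ct^{-1/q}\|f\|_{L^q}$ is perfectly valid. The paper instead handles all $q$ simultaneously by building the factor $t^{-1/q}$ directly into its supersolution; your splitting is a mild simplification in that it decouples the two mechanisms (logarithmic gain from the boundary versus polynomial $L^q$--$L^\infty$ smoothing).

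The genuine gap is that you have not built a supersolution for the $q=\infty$ step; you have only listed candidate ideas and (correctly) diagnosed why the naive ansatz fails. Neither suggestion works as stated. The global ansatz $AM\,(1+\log|x|)\,e^{-\beta|x|^2/(1+t)}/(1+\log(1+t))$ on all of $B^c\times(0,\infty)$ violates the initial inequality: at $t=0$ it equals $AM(1+\log|x|)e^{-\beta|x|^2}$, which tends to $0$ as $|x|\to\infty$ and hence cannot dominate $M$ uniformly. The gluing idea is closer to the truth, but one must specify the interface, verify the supersolution inequality up to it, and check the boundary comparison there---none of which you do. The paper resolves this by working only on the time-dependent near region $\mathcal Q_1=\{|x|\le\sqrt t,\ t\ge 4\}$, starting the comparison at $t=4$ rather than $t=0$, handling the far region $\{|x|>\sqrt t\}$ by the trivial bound $u\le \kappa_q\|f\|_{L^q}t^{-1/q}$ together with $1+\log(1+t)\le 2(1+\log|x|)$ there, and using on $\mathcal Q_1$ the explicit supersolution
\[
U(x,t)=\frac{1}{t^{1/q}}\cdot\frac{1+\log|x|^2}{2+\log t+\log|x|^2}\,e^{-|x|^2/(4t)}.
\]
The crucial point is that the spatial profile is not the static $1+\log|x|$ but the ratio $(1+\log|x|^2)/(2+\log t+\log|x|^2)$, which already carries the $\log t$ decay; one then verifies $\pa_tU-\Delta U\ge 0$ on $\mathcal Q_1$ and checks the comparison on the three pieces of the parabolic boundary (the base $t=4$, the fixed boundary $|x|=1$, and the moving boundary $|x|=\sqrt t$). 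Your proposal becomes a proof once such a construction is written down and verified.
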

\begin{proof}
Since $e^{t\Delta_{B^c}}$ is a  
positive operator, 
we may assume $f\geq 0$ without loss of generality.  
The standard (two-dimensional) $L^\infty$-$L^q$ estimate shows
for every $(x,t)\in B^c\times (0,\infty)$, 
\[
e^{t\Delta_{B^c}}f(x)
\leq e^{t\Delta_{\R^2}}f(x)
\leq 
\frac{\kappa_q\|f\|_{L^q(B^c)}}{t^{\frac{1}{q}}}, 
\quad 
\kappa_q=
\Big(1-\frac{1}{q}\Big)^{1-\frac{1}{q}}
\frac{1}{(4\pi)^{\frac{1}{q}}}
\]
which immediately gives for every $0<t\leq \tau=4$, 
\[
e^{t\Delta_{B^c}}f(x)\leq 
\frac{(1+\log 5)\kappa_q
\|f\|_{L^q(B^c)}}{t^{\frac{1}{q}}(1+\log (1+t))}(1+\log |x|), 
\quad x\in B^c.
\]
Therefore we focus our attention to the case $t\geq \tau=4$. 
Put $\mathcal{C}_t=B(0,t^{1/2})(\supset B)$ and 
\[
\mathcal{Q}_1=\bigcup_{t\geq \tau}(B_c\cap \mathcal{C}_t)\times \{t\}, 
\quad 
\mathcal{Q}_2=(B^c\times[\tau,\infty))\setminus \mathcal{Q}_1.
\]
We see from $1+\log(1+t)\leq 2(1+\log|x|)$ on 
$\mathcal{Q}_2$ that 
\begin{align*}
e^{t\Delta_{B^c}}f(x)
&\leq 
\frac{\kappa_q\|f\|_{L^q(B^c)}}{t^{\frac{1}{q}}}
\times \frac{2(1+\log |x|)}{
1+\log (1+t)}, 
\quad 
(x,t)\in \mathcal{Q}_2.
\end{align*}
For the estimate on $\mathcal{Q}_1$, we employ the comparison principle. 
Put for $(x,t)\in \overline{\mathcal{Q}_1}$,
\[
\Phi(x,t)=\frac{1+\log |x|^2}{2+\log t+\log |x|^2}, 
\quad 
U(x,t)=
\frac{\Phi(x,t)}{t^{\frac{1}{q}}}e^{-\frac{|x|^2}{4t}}.
\]
Observing that 
\begin{align*}
\pa_t\Phi=
-\frac{1+\log |x|^2}{t\Theta(x,t)^2},
\quad
\nabla\Phi=
\frac{2(1+\log t)x}{\Theta(x,t)^2|x|^2},
\quad
\Delta\Phi=-
\frac{8(1+\log t)}{\Theta(x,t)^3|x|^2}
\end{align*}
with $\Theta(x,t)=2+\log t+\log |x|^2$, 
we can deduce 
\begin{align*}
\pa_tU-\Delta U
\geq 
t^{-1-\frac{1}{q}}e^{-\frac{|x|^2}{4t}}
\left[\frac{1+2(\log t-\log |x|)}{\Theta^2}
+\Big(1-\frac{1}{q}\Big)\Phi\right]\geq 0, 
\quad (x,t)\in \mathcal{Q}_1.
\end{align*}
Moreover, noting that 
$\frac{1}{2+\log t}\leq \Phi(x,t)\leq \frac{1}{2}$ on $\mathcal{Q}_1$, 
we can find 
the comparison 
on the parabolic boundary of $\mathcal{Q}_1$ as follows:
\begin{align*}
\begin{cases}
e^{\tau\Delta}f(x)
\leq 
4e^{\frac{1}{4}}(1+\log 2)\kappa_q\|f\|_{L^q(B^c)}U(x,\tau), 
&x\in B^c \cap \mathcal{C}_{\tau}, 
\\
e^{t\Delta}f(x)=0\leq U(x,t), 
&x\in \pa \mathcal{C}_t, \ t\geq \tau, 
\\
e^{t\Delta}f(x)
\leq 2e^{\frac{1}{4}}\kappa_q\|f\|_{L^q(B^c)}U(x,t).
&x\in \pa B^c,\ t\geq \tau.
\end{cases}
\end{align*}
Therefore the comparison principle shows that 
\[
e^{t\Delta}f(x)
\leq 
4e^{\frac{1}{4}}(1+\log 2)\kappa_q
\|f\|_{L^q(B^c)}U(x,t), \quad 
(x,t)\in \mathcal{Q}_1. 
\]
The proof is complete.
\end{proof}

Taking the adjoint in Lemma \ref{lem:apdx:1}, 
we have
\begin{lemma}\label{lem:apdx:2}
For every $q\in [1,\infty]$, there exists 
a positive constant $C_{{\rm A},2,q}$ such that 
if $f\in L_{d\mu}^1$, then 
\[
\|e^{t\Delta_{B^c}}f\|_{L^q(B^c)}
\leq \frac{C_{{\rm A},2,q}}{t^{1-\frac{1}{q}}(1+\log (1+t))}\|f\|_{L_{d\mu}^1},
\quad t>0.
\]
\end{lemma}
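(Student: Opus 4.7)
The plan is to recognize Lemma A.2 as the dual of Lemma A.1 via the selfadjointness of the Dirichlet heat semigroup $T_t := e^{t\Delta_{B^c}}$ on $L^2(B^c)$. In kernel terms this selfadjointness is just the symmetry $p_t(x,y)=p_t(y,x)$ of the Dirichlet heat kernel, and it produces the reciprocity formula $\int (T_tf)g\,dx = \int f\,(T_tg)\,dx$ for a sufficiently rich class of pairs $(f,g)$. Feeding the pointwise bound from Lemma A.1 into the $g$-side of this formula transfers the $(1+\log|x|)$ factor onto the $f$-side, where it combines with $dx$ to form $d\mu$, which is exactly what Lemma A.2 demands.

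More concretely, I would fix $q \in [1,\infty]$ with conjugate exponent $q'$, and write
\[
\|T_tf\|_{L^q(B^c)} = \sup_{g \in C_c^\infty(B^c),\ \|g\|_{L^{q'}}\leq 1}\Big|\int_{B^c} (T_tf)g\,dx\Big|.
\]
By reciprocity, the right-hand integrand rewrites as $f\cdot T_tg$. I would then apply Lemma A.1 with exponent $q'$ to bound $|T_tg(x)|$ by $C_{{\rm A},1,q'}\,t^{-1/q'}(1+\log(1+t))^{-1}(1+\log|x|)\|g\|_{L^{q'}}$, pull the constants out of the integral, and observe that the remaining factor $|f(x)|(1+\log|x|)$ integrates against $dx$ to produce precisely $\|f\|_{L_{d\mu}^1}$. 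Using $1/q' = 1-1/q$ then closes the argument with $C_{{\rm A},2,q}=C_{{\rm A},1,q'}$.

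The one step that is not entirely automatic, and which I expect to be the main technical point, is the justification of the reciprocity identity when $f \in L_{d\mu}^1$ is not assumed to lie in $L^2(B^c)$ and when $q=\infty$ forces the test functions $g$ to sit only in $L^1$. I would handle this through Fubini's theorem applied to the double integral $\iint p_t(x,y)|f(y)||g(x)|\,dy\,dx$: the Dirichlet kernel is pointwise dominated by the whole-space Gauss kernel, so the double integral is absolutely convergent for $f\in L_{d\mu}^1\subset L^1(B^c)$ and $g\in C_c^\infty(B^c)$, and the two iterated integrals agree. The endpoint $q=\infty$ is then covered by the same reasoning, since $\|h\|_{L^\infty}=\sup_{g\in C_c^\infty,\ \|g\|_{L^1}\leq 1}|\int hg\,dx|$ whenever $h\in L^\infty(B^c)$, a membership that is granted a posteriori by the pointwise bound produced above.
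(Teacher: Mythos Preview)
Your proposal is correct and follows exactly the duality argument the paper uses: test against $g\in L^{q'}$, invoke the selfadjointness of $e^{t\Delta_{B^c}}$, and apply Lemma~A.1 with exponent $q'$ to obtain $C_{{\rm A},2,q}=C_{{\rm A},1,q'}$. If anything, you are more careful than the paper, which simply writes the reciprocity identity without the Fubini justification or the endpoint discussion you supply.
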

\begin{proof}
Taking $g\in L^{q'}(B^c)$ with $\frac{1}{q}+\frac{1}{q'}=1$, 
we have
\begin{align*}
\int_{B^c} (e^{t\Delta_{B^c}}f)g\,dx
=
\int_{B^c} f(e^{t\Delta_{B^c}}g)\,dx
\leq 
\frac{C_{{\rm A},1,q'}\|g\|_{L^{q'}(B^c)}}{t^{\frac{1}{q'}}(1+\log (1+t))}
\int_{B^c} |f|(1+\log |x|)\,dx.
\end{align*}
Since $g$ is arbitrary, we obtain the desired inequality.
\end{proof}

The following is $L^p$-$L^q$ type estimate
with the logarithmic weight 
which is essentially used
in this paper. 
\begin{lemma}\label{lem:apdx:3}
For every $p\in [1,\infty)$ and $q\in [p,\infty]$,
there exists a positive constant 
$C_{{\rm A},3,p,q}$ such that 
if $f\in L_{d\mu}^p$, then 
\[
\|e^{t\Delta_{B^c}} f\|_{L^q(B^c)}\leq 
\frac{C_{{\rm A},3,p,q}}{t^{\frac{1}{p}-\frac{1}{q}}(1+\log(1+t))^{\frac{1}{p}}}\|f\|_{L_{d\mu}^p}, \quad t>0.
\]
\end{lemma}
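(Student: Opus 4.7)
The plan is to obtain the estimate by interpolating, via the Riesz--Thorin theorem, between two endpoint mapping properties of $T_t := e^{t\Delta_{B^c}}$: the $L_{d\mu}^1 \to L^{q/p}$ bound supplied by Lemma \ref{lem:apdx:2}, and the trivial $L^\infty$ contractivity of the Dirichlet heat semigroup. The key structural observation is that the weighted measure $d\mu=(1+\log|x|)\,dx$ has the same null sets as Lebesgue measure, so $L^\infty(d\mu)$ coincides with $L^\infty(B^c)$ isometrically; hence $T_t$ may be regarded as acting on the full scale $L^r(d\mu)$, $1\le r\le\infty$, whose endpoints are exactly $L_{d\mu}^1$ and $L^\infty$.

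First I would record the two endpoint estimates. Applying Lemma \ref{lem:apdx:2} with the exponent $q/p\in[1,\infty]$ (admissible since $q\ge p$) gives
\[
\|T_t f\|_{L^{q/p}(B^c)} \le \frac{C_{{\rm A},2,q/p}}{t^{1-p/q}(1+\log(1+t))}\|f\|_{L_{d\mu}^1}, \qquad t>0,
\]
while the maximum principle for the Dirichlet heat equation yields the contractivity
\[
\|T_t f\|_{L^\infty(B^c)} \le \|f\|_{L^\infty(B^c)}, \qquad t>0.
\]
Next I would apply Riesz--Thorin with interpolation parameter $\theta=1-1/p\in[0,1)$. On the source side, interpolation within the single measure $d\mu$ between $L_{d\mu}^1$ and $L^\infty(d\mu)=L^\infty$ gives $L_{d\mu}^p$; on the target side, interpolation between $L^{q/p}(B^c)$ and $L^\infty(B^c)$ gives $L^{(q/p)/(1-\theta)}(B^c)=L^q(B^c)$, since $1-\theta=1/p$. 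The resulting operator norm is bounded by
\[
\left(\frac{C_{{\rm A},2,q/p}}{t^{1-p/q}(1+\log(1+t))}\right)^{1-\theta}\cdot 1^\theta
=\frac{C}{t^{1/p-1/q}(1+\log(1+t))^{1/p}},
\]
which is precisely the bound claimed by Lemma \ref{lem:apdx:3}.

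I do not anticipate a substantial obstacle: the logarithmic weight is confined to the $L^1$ endpoint, where Lemma \ref{lem:apdx:2} already delivers the sharp $(1+\log(1+t))^{-1}$ factor, while at the $L^\infty$ endpoint it drops out by virtue of $L^\infty(d\mu)=L^\infty$. The exponent $1/p$ on the logarithmic factor in the conclusion is then forced by the interpolation weight $1-\theta=1/p$. The one mildly subtle point is to feed Lemma \ref{lem:apdx:2} with the auxiliary exponent $q/p$ rather than $q$, so that the target exponent after interpolation lands on the desired $q$; once this is noticed the computation is automatic, and the case $p=1$ serves as a direct consistency check since the statement then reduces back to Lemma \ref{lem:apdx:2}.
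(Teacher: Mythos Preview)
Your proposal is correct and follows essentially the same route as the paper: both interpolate via Riesz--Thorin between the $L^\infty$-contractivity of $e^{t\Delta_{B^c}}$ and the $L_{d\mu}^1\to L^{q/p}$ estimate from Lemma~\ref{lem:apdx:2}, with interpolation parameter $\theta=1-1/p$. Your explicit remark that $L^\infty(d\mu)=L^\infty(B^c)$ (so that the source scale is a single interpolation family over $d\mu$) is a point the paper leaves implicit but is indeed what makes the application of Riesz--Thorin legitimate.
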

\begin{proof}
The case $p=1$ is already proved in Lemma \ref{lem:apdx:2}. 
Let $1<p<\infty$ and $t>0$ be fixed. 
Then 
combining the $L^\infty$-contraction property 
for $e^{t\Delta_{B^c}}$ written as
\[
\|e^{t\Delta_{B^c}}f\|_{L^\infty(B^c)}
\leq 
\|f\|_{L^\infty(B^c)}, \quad f\in L^\infty(B^c)
\]
with the inequality  Lemma \ref{lem:apdx:2} of the form
\[
\|e^{t\Delta_{B^c}}f\|_{L^{\frac{q}{p}}(B^c)}
\leq 
\frac{C_{{\rm A},2,\frac{q}{p}}}{t^{1-\frac{p}{q}}(1+\log (1+t))}\|f\|_{L_{d\mu}^1}, 
\quad 
f\in L_{d\mu}^1,
\]
we see from the Riesz--Thorin theorem with the parameter 
$\theta\in (0,1)$ satisfying 
\[
\frac{\theta}{\infty}
+
\frac{(1-\theta)p}{q}=\frac{1}{r_1}, 
\quad 
\frac{\theta}{\infty}
+
\frac{1-\theta}{1}=\frac{1}{r_2}
\]
that $e^{t\Delta_{B^c}}$ can be regarded as 
the bounded operator from $L^{r_2}_{d\mu}$ to $L^{r_1}(B^c)$ with
\[
\|e^{t\Delta_{B^c}}f\|_{L^{r_1}(B^c)}
\leq 
\left(
\frac{C_{{\rm A},2,\frac{q}{p}}}{t^{1-\frac{p}{q}}(1+\log (1+t))}
\right)^{1-\theta}\|f\|_{L^{r_2}_{d\mu}}, 
\quad 
f\in L_{d\mu}^{r_2}. 
\]
Choosing $\theta=1-\frac{1}{p}$, 
we obtain the desired inequality. 
\end{proof}
\subsection*{Acknowedgements}

This work was supported by JSPS KAKENHI Grant Numbers 
20K14346, 22H00097 and 23K03174.


{\small 

}
\end{document}